\documentclass[11pt,reqno]{amsart}
\usepackage[top=1.2in, bottom=1.2in, left=1.2in, right=1.2in]{geometry}
\usepackage{amsfonts, amssymb, amsmath, mathtools, dsfont}
\usepackage{bold-extra}
\usepackage[linktoc=page, colorlinks, linkcolor=blue, citecolor=blue]{hyperref}
\usepackage{tikz}

\numberwithin{equation}{section}

\makeatletter
\renewcommand{\part}{\@startsection{part}{0}%
	\z@{\linespacing\@plus\linespacing}{.5\linespacing}%
	{\normalfont\bfseries\scshape\raggedright}}
\makeatother

\DeclareMathOperator{\E}{\mathbb{E}} \DeclareMathOperator{\Var}{Var}    

\DeclarePairedDelimiter \abs{\lvert}{\rvert}  \DeclarePairedDelimiterX \ip[2]{\langle}{\rangle}{#1,#2} \DeclarePairedDelimiterXPP \Prob[1]{\mathbb{P}}\{\}{}{
   
   #1} \DeclarePairedDelimiterXPP \Probevent[1]{\mathbb{P}}(){}{
   
   #1}

\def \C {\mathbb{C}}

\def \R {\mathbb{R}}

\def \e {\varepsilon}
\def \d {\delta}

\def \s {\sigma}

\def \one {{\mathbf 1}}

\newtheorem{theorem}{Theorem}[section]

\newtheorem{lemma}[theorem]{Lemma}

\newtheorem{definition}[theorem]{Definition}

\theoremstyle{remark}
\newtheorem{remark}[theorem]{Remark}

\title{Two Friendly Proofs of the Berry--Esseen Theorem}

\author{Roman Vershynin}
\address{Department of Mathematics, University of California, Irvine, U.S.A.}
\email{rvershyn@uci.edu}

\thanks{Partially supported by the NSF Grant DMS 2451011 and the U.S. Air Force Grant FA9550-25-1-0294.}

\begin{document}

\begin{abstract}
	A gem of classical probability, the Berry--Esseen theorem provides a non-asymp\-totic form of the central limit theorem. This note gives a friendly and intuitive exposition of two different proofs of the Berry--Esseen theorem for nonidentically distributed random variables: the classical Fourier-analytic proof and a proof by Stein's method following E.~Bolthausen. Both proofs are self-contained; the reader may read either proof without reading the other. The exposition is suitable for a basic graduate course in probability.
\end{abstract}

\maketitle

\bigskip

The central limit theorem describes the eventual Gaussian behavior of sums of independent random variables. The Berry--Esseen theorem makes this approximation quantitative for every finite sum.

\begin{theorem}[Berry--Esseen theorem \cite{Berry, Esseen}]	\label{thm: berry-esseen}
	Let $X_{1}, \dots, X_{n}$ be independent mean-zero random variables whose sum
	$S_{n} \coloneqq X_{1} + \cdots + X_{n}$
	satisfies $\Var(S_{n}) = 1$. Let $G$ be a standard normal random variable.
	Then
	\[
	\sup_{a \in \R} \abs[\Big]{\Prob{S_{n} \le a} - \Prob{G \le a}}
	\le
	C \sum_{k=1}^{n} \E \abs{X_{k}}^{3},
	\]
	provided the right-hand side is finite, where $C$ is an absolute constant.
\end{theorem}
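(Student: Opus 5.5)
I will follow Bolthausen's approach via Stein's method, combined with an induction-type self-improving bound on the Kolmogorov distance. Let $L \coloneqq \sum_{k} \E\abs{X_k}^3$ and $\delta \coloneqq \sup_a \abs{\Prob{S_n\le a}-\Prob{G\le a}}$. If $L \ge c_0$ for a small absolute constant $c_0$, the bound is trivial since $\delta \le 1$. So assume $L$ is small. Note that by Lyapunov/Jensen, $\sigma_k^2 \coloneqq \E X_k^2 \le (\E\abs{X_k}^3)^{2/3} \le L^{2/3}$, so each summand is small.

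\textbf{Step 1 (Stein equation).} For $a \in \R$, let $f=f_a$ solve $f'(w) - w f(w) = \one_{\{w\le a\}} - \Prob{G\le a}$. I will record the standard bounds $\|f\|_\infty \le \sqrt{2\pi}/4$, $\|f'\|_\infty\le 1$, and the key Lipschitz-type estimate
\[
\abs{f'(w+t)-f'(w)} \le \abs{t}\bigl(\abs{w}+1\bigr) + \abs[\big]{\one_{\{w+t\le a\}}-\one_{\{w\le a\}}},
\]
which follows by writing $f' = wf + h$ and using $\abs{(wf)'}\le \abs{w}+1$ type bounds. Then $\Prob{S_n\le a}-\Prob{G\le a} = \E[f'(S_n)-S_nf(S_n)]$.

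\textbf{Step 2 (leave-one-out expansion).} Write $S^{(k)} = S_n - X_k$, independent of $X_k$. Then $\E S_nf(S_n) = \sum_k \E X_k\bigl(f(S_n)-f(S^{(k)})\bigr) = \sum_k \E \int_0^{X_k} X_k f'(S^{(k)}+t)\,dt$, while $\E f'(S_n) = \sum_k \sigma_k^2 \E f'(S_n)$, and $\sigma_k^2 \E f'(S^{(k)}+t)$ can be compared to $\E\int X_k f'(S^{(k)}+t)dt$ with $t$ replaced appropriately. The difference is a sum of terms of the form $\E X_k\int_0^{X_k}[f'(S^{(k)}+t)-f'(S^{(k)})]dt$ and $\sigma_k^2\E[f'(S_n)-f'(S^{(k)})]$. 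Plugging the Lipschitz estimate, the smooth part contributes $O(\sum_k \E\abs{X_k}^3 (\E\abs{S^{(k)}}+1)) = O(L)$.

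\textbf{Step 3 (the indicator term; main obstacle).} The remaining terms are like $\E\,\abs{X_k}\int_0^{\abs{X_k}}\Prob{a-t< S^{(k)}\le a \mid X_k}dt$, i.e.\ one needs a \emph{concentration} bound $\Prob{S^{(k)}\in [b, b+\abs{t}]}$. Here is where induction enters: by independence, conditioning on $X_k$, and comparing $S^{(k)}$ to $G$, this probability is at most $\abs{t}/\sqrt{2\pi}\cdot(1-\sigma_k^2)^{-1/2}+2\delta_k$, where $\delta_k$ is the Kolmogorov distance of the normalized $S^{(k)}$ from $G$ (plus a rescaling error of order $\sigma_k^2 \le L^{2/3}$). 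Bounding $\delta_k \le \delta + O(\sigma_k^2)$ or, more cleanly, setting up Bolthausen's induction over $n$ with $\delta(L) = \sup$ over all systems with Lyapunov ratio $\le L$, the indicator terms give at most $C_1 L + C_2 L\,\delta^\ast$, with $\delta^\ast$ the supremal distance over admissible systems with ratio at most $2L$ say. Combining yields $\delta \le C_3 L + C_4 L\,\delta^*$. Defining $K = \sup \delta/L$ over all systems and using that $\delta^* \le 2KL$ gives $K \le C_3 + 2C_4 K L \le C_3 + K/2$ when $L\le c_0$ small, hence $K \le 2C_3$ (finite a priori since $\delta\le 1$ and $L\ge c_0$ handled separately). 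The delicate point I expect to be hardest is making the comparison between the Kolmogorov distance of $S^{(k)}$ and that of the full class rigorous, including the rescaling by $(1-\sigma_k^2)^{-1/2}$; I would handle it by taking the supremum over all finite independent families, which makes the self-bounding argument legitimate.
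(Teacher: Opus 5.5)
\textbf{The gap: Step~3 does not produce a factor $L$.} You claim the indicator terms contribute at most $C_1L+C_2L\,\delta^*$, but your own computation does not give that factor $L$. Conditionally on $X_k$ you bound $\mathbb{P}\{a-t<S^{(k)}\le a\}\le c\abs{t}+2\delta_k$, and the error $2\delta_k$ does not shrink with $t$. Hence
\[
\E\,\abs{X_k}\int_0^{\abs{X_k}}\bigl(ct+2\delta_k\bigr)\,dt=\tfrac{c}{2}\,\E\abs{X_k}^3+2\delta_k\,\E X_k^2 ,
\]
so the Kolmogorov error is weighted by the \emph{second} moment $\sigma_k^2$, not the third. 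Summing over $k$ gives $2\sum_k\sigma_k^2\delta_k$, which is of order $\delta^*$ because $\sum_k\sigma_k^2=1$. The term $\sigma_k^2\,\E\abs{\one_{\{S_n\le a\}}-\one_{\{S^{(k)}\le a\}}}$ contributes another $2\delta^*$ in the same way. What you actually obtain is $\delta\le C_3L+4\delta^*$. With $\delta^*\le 2KL$ this yields only $K\le C_3+8K$, which is vacuous, and an induction on $n$ fails for the same reason. The obstruction is structural. For the raw indicator, $f'$ jumps at $a$, so $f'(w+t)-f'(w)$ contains an $O(1)$ piece that is not proportional to $\abs{t}$. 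Bounding the probability of a window of length $\abs{t}$ by $c\abs{t}+2\delta_k$ can never supply the missing factor $\abs{X_k}$ in front of $\delta_k$.

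\textbf{The missing idea: smooth before solving Stein's equation.} This is what the paper does, following Bolthausen. Replace $\one_{(-\infty,a]}$ by $h_a$, which decreases linearly on $[a,a+\e]$; this costs $M\e$ in Kolmogorov distance. Then $\abs{f''(x)}\le 2+2\abs{x}+\e^{-1}\one_{[a,a+\e]}(x)$ almost everywhere, so $\abs{f'(x+y)-f'(x)}\le 4\abs{y}\bigl(1+\abs{x}+\e^{-1}\int_0^1\one_{[a,a+\e]}(x+sy)\,ds\bigr)$. Now the entire increment carries the factor $\abs{y}$, and the window has fixed length $\e$, with $\mathbb{P}\{S_{n-1}+sy\in[a,a+\e]\}\lesssim\e+\delta_{n-1}(2\rho)$. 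After multiplying by $X_n^2$, the induction error appears as $\E\abs{X_n}^3\,\delta_{n-1}(2\rho)/\e$. This gives $\delta_n(\rho)\le C\rho^3\bigl(1+\delta_{n-1}(2\rho)/\e\bigr)+C\e$. Choosing $\e=10C\rho^3$ makes the coefficient $1/10$, which absorbs the factor $2^3$ lost in rescaling $S_{n-1}$. Alternatively, you could keep the raw indicator but replace the $\delta_k$-based bound by a direct concentration inequality $\mathbb{P}\{S^{(k)}\in[b,b+\ell]\}\le C(\ell+L)$, as in Chen--Shao; then no induction is needed.

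\textbf{A smaller problem: a priori finiteness.} $K=\sup\delta/L$ over all finite families is \emph{not} finite a priori. The bound $\delta\le 1$ only gives $\delta/L\le 1/L$, which is unbounded as $L\to 0$. You must induct on $n$, as the paper does (its base case uses $\rho^3\ge 1$ when $n=1$). Alternatively, take the supremum over families with at most $N$ summands; there $L\ge N^{-1/2}$, which makes the supremum finite.
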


To see how this implies the classical central limit theorem, let $Y_1,Y_2,\ldots$ be independent mean-zero random variables with unit variances and uniformly bounded third moments. Applying Theorem~\ref{thm: berry-esseen} to $X_k=Y_k/\sqrt n$ gives
\begin{equation}	\label{eq: introduction BET}
	\sup_{a\in\R}
	\abs*{\Prob[\Big]{\frac{Y_1+\cdots+Y_n}{\sqrt n}\le a}-\Prob{G\le a}}
	=O\left(\frac1{\sqrt n}\right).
\end{equation}
Thus the normalized sums converge in distribution to $G$. More precisely, \eqref{eq: introduction BET} gives uniform convergence of the distribution functions, the optimal rate $O(n^{-1/2})$, and a bound valid for every finite $n$.

\medskip

We prove the theorem twice. Part~I gives the classical Fourier-analytic argument, including an intuitive proof of Esseen's smoothing inequality. Part~II gives a proof by Stein's method, following an extremely concise argument of Bolthausen \cite{Bolthausen}. The two parts are logically independent: neither proof invokes a definition, lemma, or estimate from the other. A reader interested in just one method may therefore go directly to the corresponding part.

There are several existing textbook proofs of the Berry--Esseen theorem. The classical Fourier-analytic proof is covered, for example, in Feller \cite[Ch.~XVI, Sec.~5, Thm.~2]{Feller}; for an introduction to Stein's method, see Ross \cite{Ross} and Chen, Goldstein, and Shao \cite{CGS}.

\part{The Fourier-analytic proof}

This part is self-contained. It begins with the proof strategy, recalls the needed Fourier analysis, and develops its own smoothing and characteristic-function estimates. 

\section{The proof strategy}

Here is a bird's-eye, non-rigorous sketch of the proof of Theorem~\ref{thm: berry-esseen}. 

\smallskip 

\subsection{Smoothing.}

We begin by writing
\begin{equation}	\label{fourier: eq: prob Sn}
	\Prob{S_{n} \le a} = \E \one_{\{ S_{n} \le a \}}.
\end{equation}
The difficulty is that the function \(x \mapsto \one_{\{x \le a\}}\) is too rough: it has a discontinuous drop from $1$ to $0$ at the point $a$. To mitigate this issue, we smooth the indicator function: approximate it with a function $f(x)$ that decreases from $1$ to $0$ gradually. With such approximation, \eqref{fourier: eq: prob Sn} becomes
\[
\Prob{S_{n} \le a} \approx \E f(S_{n}).
\]
A rigorous version of this smoothing step will be given in Lemma~\ref{fourier: lem: smoothing discrepancy}.

\subsection{Fourier transform.}

Next, we replace $f(x)$ by an even simpler function -- a complex exponential $e^{2 \pi i t x}$. This can be done using the Fourier transform inversion formula (recalled in \eqref{fourier: eq: Fourier inversion} below): 
\[
\E f(S_{n}) 
= \E \int_\R e^{2\pi i t S_{n}} \, \widehat{f}(t) \, dt
= \int_\R \E \left[ e^{2\pi i t S_{n}} \right] \, \widehat{f}(t) \, dt.
\]
Applying the same method to a standard normal random variable $G$, subtracting the two expressions, and using Jensen's inequality, we obtain
\begin{equation}	\label{fourier: eq: probs via chf}
	\abs{\Prob{S_{n} \le a} - \Prob{G \le a}}
	\lesssim \int_\R \abs*{\E e^{2\pi i t S_n} - \E e^{2\pi i t G}} \, \abs{\widehat{f}(t)} \, dt.
\end{equation}
This reduces the problem to comparing the {\em characteristic functions} $\E e^{i t S_n}$ and $\E e^{i t G}$. For the standard normal random variable $G$, a simple computation yields
\begin{equation}	\label{fourier: eq: normal chf}
	\E e^{i t G} = \exp(-t^2/2), \quad t \in \R.
\end{equation}

\subsection{Taylor approximation.}

For $S_n$, independence of the random variables $X_k$ gives
\[
\E e^{i t S_{n}}
= \E \left[ e^{i t X_{1}} \right] \cdots \E \left[e^{i t X_{n}} \right],
\]
which reduces the problem to computing the characteristic function $\E e^{itX_k}$ for each random variable $X_k$ separately. By assumption, 
$$
\E X_k = 0
\quad \text{and} \quad \E X_k^2 \eqqcolon \sigma_k^2 < \infty.
$$ 
Using the Taylor approximation $e^z \approx 1 + z + z^2/2$ and taking expectations, we get
\begin{align*}
	\E e^{itX_k}
	&\approx \E \bigl( 1 + itX_k - t^2 X_k^2 / 2 \bigr) \\
	&= 1 - \sigma_k^2 t^2 / 2
	\approx \exp \bigl( - \sigma_k^2 t^2 / 2 \bigr).
\end{align*}
Multiplying over $k$ gives
\begin{equation}	\label{fourier: eq: chf Sn}
	\E e^{itS_n}
	\approx \exp \bigl( - \sigma_1^2 t^2 / 2 \bigr)\cdots
  	\exp \bigl( - \sigma_n^2 t^2 / 2 \bigr)
	= \exp \bigl( - t^2 / 2 \bigr),
\end{equation}
since $\sigma_1^2 + \cdots + \sigma_n^2 = 1$ by assumption. Comparing to \eqref{fourier: eq: normal chf}, we see that 
$$
\E e^{itS_n} \approx \E e^{itG}.
$$
Substituting this approximation into \eqref{fourier: eq: probs via chf} completes the heuristic ``proof''.

\subsection{A challenge and a fix.}

The main issue with this heuristic argument lies in the quality of the Taylor approximation. We cannot expect 
\begin{equation}	\label{fourier: eq: chf Xk}
	\E e^{itX_k} \approx \exp(-\s_k^2 t^2/2)
\end{equation}
uniformly over all $t \in \R$. For example, if $X_k$ has the Rademacher distribution, its characteristic function equals $\cos(t)$, which does not even decay to zero as $\abs{t} \to \infty$.

There is an elegant fix for this issue: choose the smoothing function $f$ whose Fourier transform $\widehat{f}$ is supported on a {\em compact interval} centered at the origin. The integrand in \eqref{fourier: eq: probs via chf} would vanish outside that interval, and so it suffices to establish the approximation \eqref{fourier: eq: chf Xk} only for $t$ close to the origin. This idea leads to \emph{Esseen's smoothing inequality} (Theorem~\ref{fourier: thm: smoothing inequality}).

In Lemma~\ref{fourier: lem: chf rv}, we will prove \eqref{fourier: eq: chf Xk} in a neighborhood of the origin. Multiplying over $k$, we will deduce a rigorous form of \eqref{fourier: eq: chf Sn} in Lemma~\ref{fourier: lem: chf sum}. Plugging it into the smoothing inequality will complete the proof of Theorem~\ref{thm: berry-esseen}.

Now let's do this step by step.

\section{Background on Fourier transform}	\label{fourier: s: Fourier transform}

First, let's recall some basic facts about Fourier transform that we will use in the proof. 

The {\em Fourier transform} of an integrable function $f: \R \to \C$ is the bounded and continuous function $\widehat{f}: \R \to \C$ defined as
\begin{equation}	\label{fourier: eq: 	Fourier transform}
	\widehat{f}(t)
	= \int_\R e^{-2 \pi i t x} f(x) \, dx.
\end{equation}

If $f$ is a {\em Schwartz function} (that is, $f$ and all its derivatives decay faster than any polynomial at infinity), then $\widehat{f}$ is also a Schwartz function.

Any Schwartz function $f$ can be reconstructed from its Fourier transform using the {\em Fourier inversion formula}: 
\begin{equation}	\label{fourier: eq: Fourier inversion}
	f(x)
	= \int_\R e^{2 \pi i t x} \widehat{f}(t) \, dt.
\end{equation}
Comparing with \eqref{fourier: eq: Fourier transform}, we see that applying the Fourier transform twice yields $f(-x)$. In particular, the Fourier transform is an involution on even functions.

The Fourier transform preserves the $L^2$ norm: for a Schwartz function $f$, the {\em Plancherel identity} says that
\begin{equation}	\label{fourier: eq: Plancherel}
	\int_\R \abs{f(x)}^2 \, dx = \int_\R \abs{\widehat{f}(t)}^2 \, dt.
\end{equation}

The {\em convolution} of two integrable functions $f$ and $g$ is the integrable function $f \ast g$ defined as 
$$
(f \ast g)(x) = \int_\R f(y) g(x-y) \, dy.
$$ 
The {\em convolution theorem} states that 
\begin{equation}	\label{fourier: eq: convolution theorem}
	\widehat{f \ast g} = \widehat{f} \cdot \widehat{g}
	\quad \text{pointwise}.
\end{equation}

\section{Smoothing}

We can write the cumulative distribution function of a random variable $X$ as
$$
\Prob{X \le a} = \E \one_{(-\infty,0]} (X-a),
$$
where $\one_{(-\infty,0]}(x)$ is the indicator of $(-\infty,0]$. We now smooth this indicator, replacing it by a function that transitions gradually from $1$ to $0$, with most of the transition occurring in an $\e$-neighborhood of the origin.

\begin{lemma}[Smoothing the discrepancy]	\label{fourier: lem: smoothing discrepancy}
	Let $\varphi$ be a probability density function, which is also a Schwartz function. Consider a smoothed version of the indicator function of $(-\infty,0]$:
	$$
	f = \one_{(-\infty,0]} \ast \varphi.
	$$
	Let $X$ and $Y$ be random variables. Assume that the probability density function of $Y$ is bounded by $M$. Then we have for any $\e>0$:
	\begin{equation}	\label{fourier: eq: smoothing discrepancy}
		\sup_{a \in \R} \abs[\Big]{\Prob{X \le a} - \Prob{Y \le a}}
		\le 2\sup_{a \in \R} \abs*{\E f \Big(\frac{X-a}{\e}\Big) - \E f \Big(\frac{Y-a}{\e}\Big)} + C_\varphi M \e.
	\end{equation}
	Here $C_\varphi$ depends only on the choice of the smoothing function $\varphi$.  
\end{lemma}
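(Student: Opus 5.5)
The plan is to sandwich the indicator between two shifted copies of the smoothed function, control the leftover mass using the tails of $\varphi$, and then handle the smoothing error via the density bound on $Y$.

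\textbf{Setup.} Write $f(x) = \int_\R \one_{(-\infty,0]}(x-y)\varphi(y)\,dy = \Prob{Z \ge x}$, where $Z$ has density $\varphi$. Thus $f$ is nonincreasing, with values in $[0,1]$. Set
\[
\Delta := \sup_a \abs{\Prob{X\le a}-\Prob{Y\le a}}, \qquad
D := \sup_a \abs{\E f((X-a)/\e) - \E f((Y-a)/\e)}.
\]

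\textbf{Comparing $f$ with the indicator.} The key pointwise fact is that $f(x) \ge 1 - \Prob{Z < x}$. In particular, for every $x \le -1$ we have
\[
\one_{(-\infty,0]}(x) - f(x) \le \Prob{Z < -1}\,\one_{(-\infty,0]}(x) \le \Prob{Z<-1}.
\]
A crude constant like this is useless on its own, since it does not scale with $\e$. I will therefore compare differences directly instead of bounding pointwise.

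\textbf{Upper bound via a shift by $\e$.} Since $f$ is monotone, its shifts give approximate upper and lower envelopes of the indicator. Writing $g_a(x) = f((x-a)/\e)$, I decompose
\[
\Prob{X\le a} - \Prob{Y\le a} = \bigl[\E \one(X\le a) - \E g_{a'}(X)\bigr] + \bigl[\E g_{a'}(X) - \E g_{a'}(Y)\bigr] + \bigl[\E g_{a'}(Y) - \Prob{Y\le a}\bigr].
\]
Here $a'$ is chosen so that the first bracket can be controlled by $\Delta$ times something less than $1$. This is the standard trick that produces the factor $2$.

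Concretely, let $p = \Prob{Z \le 1} \ge 1/2$ (after rescaling, assume the median of $\varphi$ is $0$, or simply choose the constant $1$ large). Then for every real $x$,
\[
\one(x\le a) \le \frac{1}{p}\bigl[f((x-a-\e)/\e) - \text{(tail term)}\bigr].
\]
This route gets messy. Instead I will use the cleaner identity
\[
\E g_a(X) - \E g_a(Y) = \int_\R \bigl(F_X(a+\e y) - F_Y(a+\e y)\bigr)\varphi(y)\,dy,
\]
which follows from Fubini, since $\E f((X-a)/\e) = \int \Prob{X \le a+\e y}\varphi(y)\,dy$.

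\textbf{The main estimate.} Let $h = F_X - F_Y$, and pick $a_0$ with $\abs{h(a_0)} \ge \Delta - \eta$; by symmetry, $h(a_0) > 0$. Since $F_X$ is nondecreasing and $F_Y$ is $M$-Lipschitz (its density is bounded by $M$), for $s \ge 0$ we get
\[
h(a_0+s) \ge h(a_0) - Ms \ge \Delta - \eta - Ms.
\]
Apply the identity at $a = a_0 - \e y_0$, choosing $y_0$ so that $\int_{y\ge -y_0}\varphi \ge 3/4$; such a $y_0$ exists depending only on $\varphi$. On the region $\{y \ge -y_0\}$ we have $a+\e y = a_0 + \e(y+y_0)$ with $y + y_0 \ge 0$, so the integrand there is at least $\Delta - \eta - M\e(y+y_0)$. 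On the complement, use $h \ge -\Delta$. This gives
\[
D \ge \tfrac34 \Delta - \tfrac14 \Delta - M\e\int \abs{y+y_0}\varphi(y)\,dy - \eta.
\]
The last integral is finite because $\varphi$ is Schwartz. Hence $\Delta \le 2D + C_\varphi M\e$, with $C_\varphi = 2\int\abs{y+y_0}\varphi$.

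The case $h(a_0) < 0$ is symmetric: use $s \le 0$ and the left tail. The main obstacle is choosing the shift so that the bulk of $\varphi$ sits on the favorable side. This is exactly the $3/4$ versus $1/4$ split, and it is what yields the constant $2$.
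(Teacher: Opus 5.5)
Your final argument, from the Fubini identity onward, is correct. It rests on the same three ingredients as the paper's proof:
\begin{itemize}
\item the representation $\E f((X-a)/\e)-\E f((Y-a)/\e)=\int_\R h(a+\e y)\varphi(y)\,dy$ with $h=F_X-F_Y$;
\item the one-sided bound $h(a_0+s)\ge h(a_0)-Ms$ for $s\ge 0$;
\item a split of this integral into a region where $h$ is large and a region where you only use $h\ge-\Delta$.
\end{itemize}

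Where you differ is in how the good region is chosen. The paper first turns the one-sided bound into a plateau: $h\ge\Delta/2$ on a window of length $\asymp\Delta/(M\e)$ in the $y$-variable. It centers $\varphi$ on that window and pays for the mass outside with the tail bound $\int_{\abs{y}>T}\varphi\le C_\varphi/T$; since $T\asymp\Delta/(M\e)$, the factor $\Delta$ cancels. You instead use a fixed, $\varphi$-dependent shift that puts mass $p\ge 3/4$ on the side where the linear bound holds, and you integrate that linear bound directly. This gives $(2p-1)\Delta\ge\Delta/2$ with error $M\e\int\abs{y+y_0}\varphi(y)\,dy$. Your version is slightly more direct: there is no adaptive window and the constant is explicit. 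Both versions need only a finite first moment of $\varphi$, and your direct treatment of the case $h(a_0)<0$ via left shifts is fine.

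Two small repairs are needed.
\begin{enumerate}
\item With the region $\{y\ge -y_0\}$, the evaluation point must be $a=a_0+\e y_0$. With $a=a_0-\e y_0$, as written, you get $a+\e y=a_0+\e(y-y_0)$, not $a_0+\e(y+y_0)$.
\item Delete the exploratory paragraphs before the Fubini identity, including the envelope inequality with an unspecified ``tail term''. They play no role in the proof, and the latter is not a verifiable statement as written.
\end{enumerate}
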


\begin{proof}
	{\em Step 1. Regularity of the discrepancy function.} 
	By rescaling, it is enough to consider $\e=1$. Our task is to bound the discrepancy function 
	$$
	\Delta(a) \coloneqq F_X(a) - F_Y(a),
	$$
	where $F_X(a) \coloneqq \Prob{X \le a}$ and $F_Y(a) \coloneqq \Prob{Y \le a}$ are the cumulative distribution functions. 
	
	The function $F_X$ increases, while $F_Y$ cannot increase too fast because its derivative is bounded by $M$. Putting this together, we see that $\Delta$ cannot decrease too fast: 
	\begin{equation}	\label{fourier: eq: Delta+}
		\Delta(\bar{a}+t) 
		\ge \Delta(\bar{a}) - Mt
		\quad \text{for any } \bar{a} \in \R, \; t \ge 0.
	\end{equation}
	In particular, once $\Delta$ achieves its maximum, $\Delta$ will have to remain close to its maximum for a while. To make this precise, assume without loss of generality that
	\begin{equation}	\label{fourier: eq: Delta max}
		\bar{\Delta} 
		\coloneqq \sup_{a \in \R} \abs*{\Delta(a)}
		= \sup_{a \in \R} \Delta(a).
	\end{equation}
	(Otherwise replace $X, Y$ by $-X, -Y$.) Choose a point $\bar{a}$ where $\Delta(\bar{a}) \ge 0.9 \bar{\Delta}$. Then \eqref{fourier: eq: Delta+} gives 
	$$
	\Delta(\bar{a}+t) 
	\ge 0.9 \bar{\Delta} - Mt 
	\ge \bar{\Delta}/2
	$$
	as long as $0 \le t \le \bar{\Delta}/3M$. In other words, we found an interval of length $\bar{\Delta}/3M$ on which $\Delta$ is bounded below by $\bar{\Delta}/2$.
	
	\smallskip
		
	{\em Step 2. The smoothed discrepancy function.}
	Our task is to compare the discrepancy function $\Delta(a)$ to its smoothed version 
	$$
	\Delta_f(a) 
	\coloneqq \E f(X-a) - \E f(Y-a).
	$$
	To express $\Delta_f$ in terms of $\Delta$, note that 	
	$$
	\E f(X-a) 
= \E \int_\R \one_{(-\infty,0]}(X-a-y) \varphi(y) \, dy
	= \int_\R F_X(a+y) \varphi(y) \, dy
	$$
	by the Fubini theorem. Express $\E f(Y-a)$ similarly, subtract, and obtain 
	\begin{equation}	\label{fourier: eq: Delta f}
		\Delta_f(a) 
		= \int_{-\infty}^\infty \Delta(a+y) \varphi(y) \, dy.
	\end{equation}
	
	\smallskip

	{\em Step 3. Integrating.} 
	In Step 1, we found an interval $[a_0-T, a_0+T]$ with $T = \bar{\Delta}/6M$ and on which $\Delta$ is bounded below by $\bar{\Delta}/2$. Let's decompose \eqref{fourier: eq: Delta f}:
	$$
	\Delta_f(a_0) 
	= \underbrace{\int_{\abs{y} \le T} \Delta(a_0+y) \varphi(y) \, dy}_{I_1}
		+ \underbrace{\int_{\abs{y} > T} \Delta(a_0+y) \varphi(y) \, dy}_{I_2}.
	$$
	To bound $I_2$, recall that \eqref{fourier: eq: Delta max} implies that $\Delta$ is bounded below by $-\bar{\Delta}$ everywhere. Moreover, $\varphi$ is a Schwartz function, so $\int_{\abs{y} > T} \varphi(y) \, dy \le C_\varphi / T$.
	Thus
	$$
	I_2 
	\ge -\bar{\Delta} \cdot \frac{C_\varphi}{T}.
	$$
	To bound $I_1$, recall that $\Delta$ is bounded below by $\bar{\Delta}/2$ in the range of integration. Moreover, $\varphi$ is a probability density function, so its total integral equals $1$. Thus 
	$$
	I_1
	\ge \frac{\bar{\Delta}}{2} \cdot \Big(1 - \frac{C_\varphi}{T} \Big).
	$$
	Adding the two bounds, we conclude that 
	$$
	\Delta_f(a_0)
	\ge \frac{\bar{\Delta}}{2} - \frac{3C_\varphi \bar{\Delta}}{2T}
	\ge \frac{\bar{\Delta}}{2} - 9C_\varphi M.
	$$
	Rearranging the terms yields
	$\bar{\Delta} \le 2\Delta_f(a_0) + 18C_\varphi M$,
	which completes the proof. 
\end{proof}

\medskip

In Lemma~\ref{fourier: lem: smoothing discrepancy}, we replaced the indicator function $\one_{(-\infty,0]}$ by a smooth function $f$. Now, we further replace $f$ with a very particular choice: the complex exponential. 

\begin{lemma}[Smoothed discrepancy via characteristic functions]	\label{fourier: lem: smoothed discrepancy chf}
	There exists a probability density function $\varphi$, which is also a Schwartz function, with the following property. Consider a smoothed version of the indicator function of $(-\infty,0]$:
	$$
f = \one_{(-\infty,0]} \ast \varphi.
	$$
	Let $X$ and $Y$ be random variables. Then we have for any $\e>0$:
	\[
	\sup_{a \in \R} \abs*{\E f \Big(\frac{X-a}{\e}\Big) - \E f \Big(\frac{Y-a}{\e}\Big)} 
	\le \int_{-1/\e}^{1/\e} \abs*{\frac{\E e^{itX} - \E e^{itY}}{t}} \, dt.
	\]
\end{lemma}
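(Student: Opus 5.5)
Choose $\varphi$ so that its Fourier transform is compactly supported. Take a nonnegative even Schwartz function $\psi$ supported in $[-1/(4\pi),1/(4\pi)]$, normalized so that $\psi*\psi(0)=\int\psi^2=1$, and set $\varphi=\widehat{\psi*\psi}=\widehat{\psi}^{\,2}$. Since $\psi$ is even and real, $\widehat\psi$ is real, so $\varphi\ge0$. By Fourier inversion, $\int\varphi=\widehat\varphi(0)=(\psi*\psi)(0)=\int\psi^2=1$. So $\varphi$ is a Schwartz probability density. Its transform $\widehat\varphi=\psi*\psi$ (using the involution property on even functions) is supported in $[-1/(2\pi),1/(2\pi)]$ and bounded by $\int\psi^2=1$ by Cauchy--Schwarz.

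Next I express $\E f((X-a)/\e)$ through characteristic functions. Write $f(x)=\int_x^\infty\varphi(u)\,du$, so $f'=-\varphi$. Since $\varphi=\int e^{2\pi i s x}\widehat\varphi(s)\,ds$, the difference $D(a)=\E f((X-a)/\e)-\E f((Y-a)/\e)$ can be computed. First I compute it formally, and justify it at the end by taking $X,Y$ perturbed or via a limiting argument.

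Differentiating in $a$ gives $D'(a)=\frac1\e\E[\varphi((X-a)/\e)-\varphi((Y-a)/\e)]$, which by Fubini equals
$$\frac1\e\int \widehat\varphi(s)e^{-2\pi i s a/\e}\bigl(\E e^{2\pi i sX/\e}-\E e^{2\pi i sY/\e}\bigr)\,ds.$$
Since $D(a)\to0$ as $a\to\pm\infty$ (both expectations tend to $0$ or $1$ by dominated convergence), integrate from $-\infty$ to $a$. The factor $e^{-2\pi i s a/\e}$ integrates to $\frac{\e}{-2\pi i s}e^{-2\pi i s a/\e}$, and there is no singularity at $s=0$ because the bracket vanishes there. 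Changing variables $t=2\pi s/\e$ then gives
$$D(a)=\frac{1}{2\pi i}\int\widehat\varphi\Bigl(\frac{\e t}{2\pi}\Bigr)e^{-ita}\,\frac{\E e^{itX}-\E e^{itY}}{-t}\,dt.$$
The support condition restricts this to $|t|\le1/\e$. Bounding $|\widehat\varphi|\le1$ and $\frac1{2\pi}\le1$ yields the claim.

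The main obstacle is rigor in the integration step: exchanging $\int_{-\infty}^a da$ with $ds$, since $e^{-2\pi i sa/\e}/s$ is not absolutely integrable. The cleanest fix I would try first is to avoid differentiating. Write directly $f(x)=\int\widehat\varphi(s)\,g_s(x)\,ds$ in the form
$$f(x)-\tfrac12=\int\widehat\varphi(s)\,\frac{e^{2\pi i s x}-\text{const}}{-2\pi i s}\,ds,$$
or equivalently compute $f((x-a)/\e)-f((x'-a)/\e)=\int_{(x-a)/\e}^{(x'-a)/\e}\varphi$. Expand $\varphi$ by inversion and integrate explicitly in $u$ over a finite interval, which is legitimate. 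This gives
$$f\Bigl(\frac{x-a}{\e}\Bigr)-f\Bigl(\frac{x'-a}{\e}\Bigr)=\int\widehat\varphi(s)\,\frac{e^{2\pi i s(x'-a)/\e}-e^{2\pi i s(x-a)/\e}}{2\pi i s}\,ds,$$
with an integrand bounded by $|x-x'|/\e$ and supported on $|s|\le 1/(2\pi)$. Now take $x'=Y'$ with $Y'$ an independent copy of $Y$, and $x=X$. Take expectations and use Fubini, which is justified when $\E|X|,\E|Y|<\infty$; otherwise truncate $X,Y$ and pass to the limit, using bounded convergence for characteristic functions and continuity of $f$. The mixed term $e^{-2\pi i s a/\e}$ then separates into $\E e^{2\pi i sY/\e}-\E e^{2\pi i sX/\e}$, which gives exactly the formula above.
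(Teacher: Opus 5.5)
Your construction of $\varphi=\widehat\psi^{\,2}$ is the paper's. Your rigorous argument is a variant of the paper's Steps~2--3 that pairs the variables differently.

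The paper applies Fourier inversion to $f_M=\one_{(-M,0]}\ast\varphi$, i.e.\ to the increment $f(u)-f(u+M)$ against a \emph{deterministic} shift. Since $\widehat{f_M}(t)=\widehat\varphi(t)(e^{2\pi i tM}-1)/(2\pi i t)$ is integrable and bounded by $1/(\pi\abs{t})$ uniformly in $M$, the expectations under $X$ and under $Y$ are computed separately with no moment assumptions. The limit $M\to\infty$ is then monotone convergence.

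You instead take the increment between $X$ and $Y$ themselves. When $\E\abs{X},\E\abs{Y}<\infty$ this is fully rigorous: your bound $\abs{x-x'}/\e$ on the integrand justifies Fubini. It covers the application $X=S_n$, $Y=G$. It even gives the constant $1/(2\pi)$ instead of the paper's $1/\pi$, since you never use $\abs{e^{2\pi i tM}-1}\le 2$.

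The gap is the general case, which the lemma asserts for arbitrary $X$ and $Y$. After truncating to $X_N,Y_N$ you must show, with $\phi_Z(t)\coloneqq\E e^{itZ}$,
\[
\limsup_{N\to\infty}\int_{-1/\e}^{1/\e}\frac{\abs{\phi_{X_N}(t)-\phi_{Y_N}(t)}}{\abs{t}}\,dt\le\int_{-1/\e}^{1/\e}\frac{\abs{\phi_X(t)-\phi_Y(t)}}{\abs{t}}\,dt,
\]
and Fatou's lemma gives only the reverse inequality. Bounded convergence yields pointwise convergence of the characteristic functions, but no dominating function for the weight $1/\abs{t}$ near $t=0$. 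For instance, the natural estimate $\abs{\phi_{X_N}-\phi_X}\le 2\Prob{\abs{X}>N}$ controls the error over $1/N\le\abs{t}\le 1/\e$ only by $4\Prob{\abs{X}>N}\log(N/\e)$. This does not tend to $0$ when, e.g., $\Prob{\abs{X}>x}=1/\log x$ for large $x$. So this step is unproved as written.

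The simplest repair is to use your own increment formula with the deterministic pair $x'=x+M\e$ in place of $x'=Y$. The integrand is then bounded by $M\abs{\widehat\varphi(s)}$ uniformly in $x$, so you may take expectations under $X$ and $Y$ separately. Then bound $\abs{e^{2\pi i sM}-1}\le 2$ and let $M\to\infty$, using $f(u)\to0$ as $u\to\infty$. This is exactly the paper's proof.

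If you want to keep the constant $1/(2\pi)$, keep your pairing but first split off the frequencies $\abs{s}\le\eta$.
\begin{itemize}
\item On $\abs{s}>\eta$ the integrand is bounded by $\abs{\widehat\varphi(s)}/(\pi\eta)$, so Fubini is free.
\item Since $\widehat\varphi$ is even, the removed part equals $g_\eta\big((Y-a)/\e\big)-g_\eta\big((X-a)/\e\big)$, where $g_\eta(u)=\int_{\abs{s}\le\eta}\widehat\varphi(s)\frac{\sin(2\pi su)}{2\pi s}\,ds$.
\item $g_\eta$ is bounded uniformly in $u$ and $\eta$ (by boundedness of the sine integral) and tends to $0$ as $\eta\to0$, so dominated convergence finishes the argument.
\end{itemize}
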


\begin{proof}
	{\em Step 1. Choosing a smoothing function.}
	By translation and dilation, we can assume that $a=0$ and $\e=1$, so it suffices to prove the following version of the conclusion:
	$$
	\abs*{\E f(X) - \E f(Y)}
	\le \int_{-1}^1 \abs*{\frac{\E e^{itX} - \E e^{itY}}{t}} \, dt.
	$$
	Choose any probability density function $\varphi$, which is also a Schwartz function, and whose Fourier transform is supported in the interval $\big[-1/(2\pi),1/(2\pi)\big]$. 
	
	(Why does such $\varphi$ exist? Take any even Schwartz function $\psi$ supported in $[-1/4\pi, 1/4\pi]$ and satisfying $\int_\R \psi(x)^2 \, dx = 1$, and set 
	$$
	\varphi(t) \coloneqq \widehat{\psi}(t)^2.
	$$
	Since $\psi$ is even and Schwartz, $\widehat{\psi}$ is real-valued and Schwartz, so $\varphi$ is nonnegative and Schwartz. Moreover, Plancherel identity \eqref{fourier: eq: Plancherel} gives $\int_\R \widehat{\psi}(t)^2 \, dt = \int_\R \psi(x)^2 \, dx = 1$, showing that $\varphi$ is a probability density function. 
Finally, we have $\widehat{\varphi} = \psi \ast \psi$: to see this, first apply the convolution theorem \eqref{fourier: eq: convolution theorem} and then the Fourier inversion formula \eqref{fourier: eq: Fourier inversion}, noting that $\psi$ is even. Thus, $\widehat{\varphi}$ is supported on $\big[-1/(2\pi),1/(2\pi)\big]$.)
	
	\smallskip
	
	{\em Step 2. Bounding the Fourier transform.}
	Since the function $f$ approaches $1$ at $-\infty$, it is not integrable. To fix this, let us consider the integrable function 
	$$
	f_M = \one_{(-M,0]} \ast \varphi.
	$$
	Since the indicators $\one_{(-M,0]}$ increase to $\one_{(-\infty,0]}$ pointwise as $M \to \infty$, the monotone convergence theorem implies that the functions $f_M$ increase to $f$ pointwise. Another application of the monotone convergence theorem yields
	\begin{equation}	\label{fourier: eq: fM vs f}
		\E f_M(X) \to \E f(X), \quad \E f_M(Y) \to \E f(Y)
		\quad \text{as } M \to \infty.
	\end{equation}
	We claim that 
	\begin{equation}	\label{fourier: eq: fM hat}
		\text{$\widehat{f_M}$ is supported on $\big[ -\frac{1}{2\pi}, \frac{1}{2\pi} \big]$} 
		\quad \text{and} \quad
		\abs[\big]{\widehat{f_M}(t)} \le \frac{1}{\pi \abs{t}}, \quad t \in \R.
	\end{equation}
	Indeed, the convolution theorem \eqref{fourier: eq: convolution theorem} shows that $\widehat{f_M} = \widehat{\one_{(-M,0]}} \cdot \widehat{\varphi}$. Now, 
	$$
	\widehat{\one_{(-M,0]}}(t) 
	= \int_{-M}^0 e^{-2 \pi i t x} \, dx
	= \frac{e^{2 \pi i t M} - 1}{2 \pi i t},
	\quad \text{so} \quad
	\abs*{\widehat{\one_{(-M,0]}}(t)}
	\le \frac{1}{\pi \abs{t}}.
	$$
	Moreover, by construction, $\abs*{\widehat{\varphi}(t)}$ vanishes outside $\big[-1/(2\pi),1/(2\pi)\big]$. It is bounded by $\int_\R \abs{\varphi(x)} \, dx = 1$ since $\varphi$ is a probability density function. Combining these bounds proves the claim.
	
	\smallskip
	
	{\em Step 3. Integrating.}
	Using the inverse Fourier transform formula, we can write
	$$
	\E f_M(X)
	= \E \int_\R e^{2 \pi i t X} \widehat{f_M}(t) \, dt
	= \int_\R \E \big[ e^{2 \pi i t X} \big] \widehat{f_M}(t) \, dt
	$$
	by Fubini theorem (note that $f_M$ is Schwartz by construction, so $\widehat{f_M}$ is Schwartz, too). Write the same for $\E f_M(Y)$, subtract and use Jensen's inequality to get
	\begin{align*}
		\abs*{\E f_M(X) - \E f_M(Y)}
		&\le \int_\R \abs*{\E e^{2 \pi i t X} - \E e^{2 \pi i t Y}} \abs[\big]{\widehat{f_M}(t)} \, dt \\
		&\le \int_{-1/(2\pi)}^{1/(2\pi)} \abs*{\E e^{2 \pi i t X} - \E e^{2 \pi i t Y}} \cdot \frac{1}{\pi \abs{t}} \, dt
			\quad \text{(using \eqref{fourier: eq: fM hat}).}
	\end{align*}
	Make the change of variable $s = 2 \pi t$, take limit as $M \to \infty$ using \eqref{fourier: eq: fM vs f}, and the proof is complete. 
\end{proof}

Combining Lemmas \ref{fourier: lem: smoothing discrepancy} and \ref{fourier: lem: smoothed discrepancy chf}, we immediately obtain 

\begin{theorem}[Esseen's smoothing inequality \cite{Esseen smoothing}]\label{fourier: thm: smoothing inequality}
	Let $X$ and $Y$ be random variables. Assume that the probability density function of $Y$ is bounded by $M$. Then we have for any $\e>0$:
	\[
	\sup_{a \in \R} \abs[\Big]{\Prob{X \le a} - \Prob{Y \le a}}
	\le 2 \int_{-1/\e}^{1/\e} \abs*{\frac{\E e^{itX} - \E e^{itY}}{t}} \, dt 
		+ C M \e.
	\]
	Here $C$ is an absolute constant. 
\end{theorem}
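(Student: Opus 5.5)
The plan is to chain the two preceding lemmas, using one and the same smoothing function in both. First I fix the probability density $\varphi$ supplied by Lemma~\ref{fourier: lem: smoothed discrepancy chf}. It is a Schwartz probability density whose Fourier transform is supported in $[-1/(2\pi),1/(2\pi)]$. I set $f = \one_{(-\infty,0]} \ast \varphi$. Because this $\varphi$ is also a Schwartz probability density, it is an admissible choice in Lemma~\ref{fourier: lem: smoothing discrepancy}. That lemma then gives, for every $\e>0$,
\[
\sup_{a} \abs[\big]{\Prob{X \le a} - \Prob{Y \le a}}
\le 2\sup_{a} \abs*{\E f \Big(\frac{X-a}{\e}\Big) - \E f \Big(\frac{Y-a}{\e}\Big)} + C_\varphi M \e .
\]

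Next I bound the smoothed discrepancy on the right by Lemma~\ref{fourier: lem: smoothed discrepancy chf}, applied with the same $f$ and the same $\e$. It gives
\[
\sup_{a} \abs*{\E f \Big(\frac{X-a}{\e}\Big) - \E f \Big(\frac{Y-a}{\e}\Big)}
\le \int_{-1/\e}^{1/\e} \abs*{\frac{\E e^{itX} - \E e^{itY}}{t}} \, dt .
\]
Substituting this into the previous display yields exactly the stated inequality, with $C \coloneqq C_\varphi$.

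The only point that needs care is why $C$ is an absolute constant. The function $\varphi$ is a single fixed function, chosen once in Lemma~\ref{fourier: lem: smoothed discrepancy chf} independently of $X$, $Y$, $M$ and $\e$. Hence $C_\varphi$ is one fixed number. I would also check that both lemmas use the same rescaling $(x-a)/\e$, so the two displays match term by term. Beyond this bookkeeping I see no real obstacle.
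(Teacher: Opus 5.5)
Your proposal is correct and is exactly the paper's argument. The paper also fixes the Schwartz density $\varphi$ from Lemma~\ref{fourier: lem: smoothed discrepancy chf}, inserts it into Lemma~\ref{fourier: lem: smoothing discrepancy}, and chains the two bounds, with $C = C_\varphi$ absolute because $\varphi$ is chosen once and for all.
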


\section{Characteristic functions}

Esseen's smoothing inequality (Theorem~\ref{fourier: thm: smoothing inequality}) reduces the problem of approximating a cumulative distribution function $\Prob{X \le a}$ to approximating the characteristic function $\E e^{itX}$. So, what can we say about the characteristic function?

\begin{lemma}[The characteristic function of a random variable] \label{fourier: lem: chf rv}
	Let $X$ be a random variable with 
	\begin{equation}	\label{fourier: eq: X X2 X3}
		\E X = 0, \quad 
		\E X^2 = \s^2, \quad
		\E \abs{X}^3 = \rho^3 < \infty.
	\end{equation}
	Then
	\begin{equation}	\label{fourier: eq: chf approximation}
		\E e^{itX}
		= \exp \Big( -\frac{\s^2 t^2}{2} + O(\rho^3 t^3) \Big)
		\quad \text{whenever} \quad \abs{t} \le \frac{1}{\rho}
	\end{equation}
	and 
	\begin{equation}	\label{fourier: eq: chf bound}
		\abs*{\E e^{itX}}
		\le \exp \Big( -\frac{\s^2 t^2}{2} + O(\rho^3 t^3) \Big)
		\quad \text{for any} \quad t \in \R.
	\end{equation}
\end{lemma}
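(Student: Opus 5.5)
The plan is to Taylor-expand $e^{itX}$ to second order with a cubic remainder, and then pass to the logarithm using the fact that $\s \le \rho$. Two elementary bounds drive everything. First, $\abs{e^{ix} - 1 - ix + x^2/2} \le \abs{x}^3/6$ for real $x$. Second, Lyapunov's (Jensen's) inequality gives $\s^2 \le \rho^2$, i.e.\ $\s \le \rho$.

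Taking expectations and using $\E X = 0$, we get
\[
\E e^{itX} = 1 - \frac{\s^2 t^2}{2} + r, \qquad \abs{r} \le \frac{\rho^3 \abs{t}^3}{6}.
\]
For $\abs{t} \le 1/\rho$ we have $\s^2 t^2 \le \rho^2 t^2 \le 1$. Hence $z \coloneqq -\s^2t^2/2 + r$ satisfies $\abs{z} \le 1/2 + 1/6 < 1$, so we can take the principal logarithm, $\log(1+z) = z + O(\abs{z}^2)$. The key point is to bound $\abs{z}^2$ by a multiple of $\rho^3 \abs{t}^3$. We have $\abs{z} \le \s^2t^2/2 + \rho^3\abs{t}^3/6 \le \rho^2t^2$, so $\abs{z}^2 \le \rho^4 t^4 \le \rho^3 \abs{t}^3$, using $\rho\abs{t} \le 1$. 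Therefore
\[
\log \E e^{itX} = -\frac{\s^2t^2}{2} + r + O(\rho^3\abs{t}^3) = -\frac{\s^2t^2}{2} + O(\rho^3 \abs{t}^3),
\]
and exponentiating gives \eqref{fourier: eq: chf approximation}.

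For \eqref{fourier: eq: chf bound}, the range $\abs{t} \le 1/\rho$ already follows from \eqref{fourier: eq: chf approximation} by taking absolute values, since the real part of the exponent is bounded by $-\s^2t^2/2 + O(\rho^3\abs{t}^3)$. For $\abs{t} > 1/\rho$ we use the trivial bound $\abs{\E e^{itX}} \le 1$. It suffices to check that $-\s^2t^2/2 + C\rho^3\abs{t}^3 \ge 0$ there for a suitable absolute constant $C$. This holds with $C = 1/2$: we have $\s^2 t^2/2 \le \rho^2t^2/2 \le \rho^3\abs{t}^3/2$ because $\rho\abs{t} > 1$. Taking the $O$-constant to be the larger of this one and the one from the first part gives a single absolute constant valid for all $t$.

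The main obstacle is the first part: controlling the quadratic error of the logarithm by $\rho^3\abs{t}^3$ rather than by $\s^4t^4$ alone. The inequality $\s \le \rho$, together with the restriction $\rho\abs{t} \le 1$, is exactly what makes this work. One should also note that the $O(\cdot)$ may hide a complex quantity in \eqref{fourier: eq: chf approximation}, with modulus at most $C\rho^3\abs{t}^3$.
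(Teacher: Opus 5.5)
Your proof is correct and follows essentially the same route as the paper: a third-order Taylor expansion of $e^{ix}$, linearizing the logarithm on $\abs{z} \le 2/3$ using $\s \le \rho$ and $\rho\abs{t} \le 1$, and the trivial bound $\abs{\E e^{itX}} \le 1$ when $\abs{t} > 1/\rho$. The only cosmetic difference is that you control the quadratic error through the single bound $\abs{z} \le \rho^2 t^2$. The paper instead expands $(-a/2+\theta b)^2$ into $a^2$, $ab$ and $b^2$ terms and uses $a^2 \le \abs{b} \le 1$.
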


In the statement and proof of this lemma, we use the $O(\cdot)$ notation to hide factors that are bounded by absolute constants. Precisely, $O(a)$ stands for $\theta a$ where $\theta$ is some quantity that satisfies $\abs{\theta} \le C$, where $C$ is an absolute constant. The quantity $\theta$ and the constant $C$ may change from line to line. 

\begin{proof}
	{\em Step 1. Approximating the exponential function.}
	To prove \eqref{fourier: eq: chf approximation}, write a Taylor approximation of the exponential function:
	$$
	e^{ix} 
	= 1 + ix - \frac{x^2}{2} + \theta_0 x^3
	\quad \text{for some } \theta_0 = \theta_0(x) 
	\text{ satisfying } \abs{\theta_0} \le \frac{1}{6}.
	$$
	(This holds since the third derivative of $e^{ix}$ is bounded by $1$ in modulus.)
	Substitute $x=tX$ and take expectation to get
	$$
	\E e^{itX}
	= 1 + it \E X - \frac{t^2 \E X^2}{2} + t^3 \E [\theta_0 X^3]
	$$
	for some random variable $\theta_0$ satisfying $\abs{\theta_0} \le \frac{1}{6}$ pointwise. Using the assumptions \eqref{fourier: eq: X X2 X3}, we get 
	\begin{equation}	\label{fourier: eq: chf quadratic}
		\E e^{itX}
		= 1 - \frac{\s^2 t^2 }{2} + \theta \rho^3 t^3
		\quad \text{for some } \theta = \theta(x) 
		\text{ satisfying } \abs{\theta} \le \frac{1}{6}.
	\end{equation}
	For convenience, let us rewrite this as 
	\begin{equation}	\label{fourier: eq: chf a b}
		\E e^{itX}
		= 1 - \frac{a}{2} + \theta b, 
		\quad \text{where} \quad 
		a = \s^2 t^2 \text{ and }
		b = \rho^3 t^3,
	\end{equation}
	and note that
	\begin{equation}	\label{fourier: eq: a2 b 1}
		a^2 \le \abs{b} \le 1.
	\end{equation}
	(The second inequality follows from the assumption $\abs{t} \le \frac{1}{\rho}$. To check the first inequality, note that $\s \le \rho$ by Jensen's inequality, so $a^2 = \s^4 t^4 \le \rho^4 t^4 = \abs{b}^{4/3} \le \abs{b}$.)
	
	\smallskip
	
	{\em Step 2. Linearizing the logarithmic function.} Now write a Taylor approximation of the logarithmic function:
	$$
	\ln(1+x) = x + O(x^2)
	\quad \text{whenever} \quad
	\abs{x} \le \frac{2}{3}.
	$$
	We can use this for $x \coloneqq -\frac{a}{2} + \theta b$, since \eqref{fourier: eq: a2 b 1} and \eqref{fourier: eq: chf quadratic} guarantee that $\abs{x} \le \frac{1}{2} + \frac{1}{6} = \frac{2}{3}$. We get
	$$
	\ln \Big( 1 - \frac{a}{2} + \theta b \Big)
	= - \frac{a}{2} + \theta b + O \Big( \big(- \frac{a}{2} + \theta b \big)^2 \Big).
	$$
	Expanding the square and letting the $O(\cdot)$ notation absorb the factors bounded by absolute constants, we conclude that
	$$
	\ln \Big( 1 - \frac{a}{2} + \theta b \Big)
	= - \frac{a}{2} + O(b) + O(a^2) + O(ab) + O(b^2)
	= - \frac{a}{2} + O(b),
	$$
	where the last step follows from \eqref{fourier: eq: a2 b 1}.
	Recalling \eqref{fourier: eq: chf a b}, we see that we proved that
	$$
	\ln \left( \E e^{itX} \right) = - \frac{\s^2 t^2}{2} + O(\rho^3 t^3),
	$$
	as claimed. 
	
	\smallskip
	
	{\em Step 3. Deducing the bound \eqref{fourier: eq: chf bound}.} 
	In the range $\abs{t} \le \frac{1}{\rho}$, the bound \eqref{fourier: eq: chf bound} follows from \eqref{fourier: eq: chf approximation}. And if $\abs{t} > \frac{1}{\rho}$, the bound is nearly trivial. Indeed, since $\abs{e^{ix}} = 1$ holds pointwise, we have
	$$
	\abs{\E e^{itX}} \le 1.
	$$ 
	On the other hand, $\s \le \rho$ and $\rho \abs{t} > 1$ yield $\s^2 t^2/2 \le \rho^2 t^2/2 \le \rho^3 \abs{t}^3$, so 
	$$
	\exp \Big( -\frac{\s^2 t^2}{2} + \rho^3 \abs{t}^3 \Big)
	\ge \exp(0) = 1,
	$$
	and \eqref{fourier: eq: chf bound} follows.
\end{proof}

\smallskip

\begin{remark}[No approximation everywhere]
	 One might ask whether the bound \eqref{fourier: eq: chf approximation} could hold for all $t \in \R$, in which case we won't need the separate bound \eqref{fourier: eq: chf bound}. This is false in general. For instance, a characteristic function $\E e^{itX}$ may have compact support; then the left-hand side of \eqref{fourier: eq: chf approximation} vanishes for large $\abs{t}$, while the right-hand side remains positive.
\end{remark}

Our next goal is to approximate the characteristic function of a sum of independent random variables. To do this, we will multiply the bounds from Lemma~\ref{fourier: lem: chf rv} to obtain:

\begin{lemma}[The characteristic function of a sum]	\label{fourier: lem: chf sum}
	There exist absolute constants $C,c>0$ so that the following holds. 
	Let $X_1,\ldots,X_n$ be independent random variables that satisfy
	$$
	\E X_k = 0, \quad 
	\E X_k^2 = \s_k^2, \quad
	\E \abs{X_k}^3 = \rho_k^3 < \infty.
	$$
	Assume that
	$$
	\sum_{k=1}^n \s_k^2 = 1
	\quad \text{and let} \quad
	\rho^3 \coloneqq \sum_{k=1}^n \rho_k^3.
	$$
	Then the sum $S_n \coloneqq X_1 + \cdots + X_n$ satisfies
	$$
	\abs*{\E e^{itS_n} - e^{-t^2/2}}
	\le C \rho^3 \abs{t}^3 e^{-t^2/4}
	\quad \text{whenever} \quad \abs{t} \le \frac{c}{\rho^3}.
	$$
\end{lemma}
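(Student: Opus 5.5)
We multiply the single-variable estimates of Lemma~\ref{fourier: lem: chf rv} over $k$ and split the range of $t$ into two regimes: a small regime, where every factor admits the two-sided approximation \eqref{fourier: eq: chf approximation}, and a moderate regime, where we only use the upper bound \eqref{fourier: eq: chf bound}. Throughout, $C_0$ denotes the absolute constant implicit in the $O(\cdot)$ of Lemma~\ref{fourier: lem: chf rv}, and we choose $c \le 1/(4C_0)$.

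\emph{Step 1 (a uniform upper bound).} By independence, $\abs{\E e^{itS_n}} = \prod_k \abs{\E e^{itX_k}}$. Apply \eqref{fourier: eq: chf bound} to each factor, which is valid for all $t$, and use $\sum_k \s_k^2 = 1$ and $\sum_k \rho_k^3 = \rho^3$. This gives
\[
\abs{\E e^{itS_n}} \le \exp\bigl(-t^2/2 + C_0\rho^3\abs{t}^3\bigr).
\]
Since $\abs{t} \le c/\rho^3$, we have $C_0\rho^3\abs{t}^3 = C_0(\rho^3\abs{t})\,t^2 \le t^2/4$. Hence $\abs{\E e^{itS_n}} \le e^{-t^2/4}$ on the whole range of the lemma.

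\emph{Step 2 (small regime: $\abs{t} \le 1/\rho$).} Here $\rho_k^3 \le \rho^3$ gives $\rho_k \le \rho$, so $\abs{t} \le 1/\rho_k$ for every $k$. Thus \eqref{fourier: eq: chf approximation} applies to each factor. Multiplying the factors gives
\[
\E e^{itS_n} = \exp\bigl(-t^2/2 + w\bigr), \qquad \abs{w} \le C_0\rho^3\abs{t}^3.
\]
We take the principal logarithm in each factor, and the factors multiply consistently. We then write
\[
\E e^{itS_n} - e^{-t^2/2} = e^{-t^2/2}\bigl(e^{w} - 1\bigr)
\]
and use $\abs{e^w - 1} \le \abs{w} e^{\abs{w}}$. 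Combined with $\abs{w} \le t^2/4$ from Step 1, this yields
\[
\abs*{\E e^{itS_n} - e^{-t^2/2}} \le C_0\rho^3\abs{t}^3 e^{-t^2/2 + t^2/4} = C_0\rho^3\abs{t}^3 e^{-t^2/4}.
\]

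\emph{Step 3 (moderate regime: $1/\rho < \abs{t} \le c/\rho^3$).} This regime is nonempty only when $\rho$ is small, and here the approximation may fail for individual factors. However, $\rho\abs{t} > 1$ gives $\rho^3\abs{t}^3 > 1$, so the target right-hand side is at least $C e^{-t^2/4}$. By the triangle inequality and Step 1,
\[
\abs*{\E e^{itS_n} - e^{-t^2/2}} \le e^{-t^2/4} + e^{-t^2/2} \le 2e^{-t^2/4} \le 2\rho^3\abs{t}^3 e^{-t^2/4}.
\]
Taking $C = \max(C_0, 2)$ finishes the proof.

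The main obstacle is Step 2: one must ensure that the cubic error is dominated by a fraction of the Gaussian exponent. This is precisely where the hypothesis $\abs{t} \le c/\rho^3$, rather than $\abs{t} \le c/\rho$, enters, since it is what makes $C_0\rho^3\abs{t}^3 \le t^2/4$. The one technical point to get right is the consistent choice of logarithms when multiplying the complex approximations from \eqref{fourier: eq: chf approximation}.
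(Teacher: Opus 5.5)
Your proof is correct and follows the paper's argument: the same split at $\abs{t} = 1/\rho$, multiplying \eqref{fourier: eq: chf approximation} over $k$ in the small regime, and using \eqref{fourier: eq: chf bound} together with the triangle inequality and $\rho^3\abs{t}^3 > 1$ in the moderate regime. The only differences are cosmetic. In the small regime you control $e^{\abs{w}}$ via $\abs{w} \le t^2/4$, whereas the paper simply uses $\abs{w} \le C$, which follows from $\rho\abs{t} \le 1$. So the hypothesis $\abs{t} \le c/\rho^3$ is really needed only in the moderate regime, not in Step 2 as your closing remark suggests. Also, no branch-of-logarithm issue arises, since $\prod_k e^{z_k} = e^{\sum_k z_k}$ holds for any complex $z_k$.
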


\begin{proof}
	{\em Step 1. Assume that $\abs{t} \le \frac{1}{\rho}$.}
	Then $\abs{t} \le \frac{1}{\rho_k}$ for each $k$, which allows us to apply \eqref{fourier: eq: chf approximation} and get 
	$$
	\E e^{itX_k}
	= \exp \Big( -\frac{\s_k^2 t^2}{2} + \theta_k \rho_k^3 t^3 \Big)
	\quad \text{for some } \theta_k = \theta_k(t) 
	\text{ satisfying } \abs{\theta_k} \le C.
	$$
	By independence, this yields
	$$
	\E e^{itS_n} 
	= \prod_{k=1}^n \E e^{itX_k}
	= \exp \Big( -\frac{t^2}{2} + \theta \rho^3 t^3 \Big)
	\quad \text{for some } \theta = \theta(t) 
	\text{ satisfying } \abs{\theta} \le C.
	$$
	Therefore
	$$
	\abs*{\E e^{itS_n} - e^{-t^2/2}}
	= e^{-t^2/2} \abs*{e^{\theta \rho^3 t^3}-1}
	\le C_1 \rho^3 \abs{t}^3 e^{-t^2/2},
	$$
	and we are done. (The last bound follows once we apply the Taylor approximation of the exponential function $\abs{e^x-1} \le \abs{x} e^{\abs{x}}$ for $x \coloneqq \theta \rho^3 t^3$ and note that $\abs{x} \le C$ by assumption on $t$.) 
	
	\smallskip
	
	{\em Step 2. Assume that $\frac{1}{\rho} < \abs{t} \le \frac{c}{\rho^3}$.}
	Arguing similarly to Step~1, but applying \eqref{fourier: eq: chf bound} instead, we obtain 
	$$
	\abs*{\E e^{itS_n}}
	\le \exp \Big( -\frac{t^2}{2} + \theta \rho^3 t^3 \Big)
	\quad \text{for some } \theta = \theta(t) 
	\text{ satisfying } \abs{\theta} \le C.
	$$
	Choosing the absolute constant $c>0$ in the assumption on $t$ small enough, we can make sure that $\theta \rho^3 t^3 \le t^2/4$. This gives 
	$$
	\abs*{\E e^{itS_n}} 
	\le e^{-t^2/4}.
	$$
	Hence, by triangle inequality, we conclude that
	$$
	\abs*{\E e^{itS_n} - e^{-t^2/2}}
	\le e^{-t^2/4} + e^{-t^2/2}
	\le 2 \rho^3 \abs{t}^3 e^{-t^2/4},
	$$
	since $\rho^3 \abs{t}^3 \ge 1$ by assumption. The lemma is proved. 
\end{proof}

\section{Proof of Theorem~\ref{thm: berry-esseen}}

Now we are ready to prove the Berry--Esseen theorem, Theorem~\ref{thm: berry-esseen}. Set
$$
\rho^3 
\coloneqq \sum_{k=1}^{n} \E \abs{X_{k}}^{3}.
$$
Apply Esseen's smoothing inequality (Theorem~\ref{fourier: thm: smoothing inequality}) with $X=S_n$, $Y=G$, and $1/\e=c/\rho^3$. Since the density of $G$ is bounded by an absolute constant and its characteristic function equals $e^{-t^2/2}$, we obtain
$$
\sup_{a \in \R} \abs[\Big]{\Prob{S_n \le a} - \Prob{G \le a}}
\le 2 \int_{-c/\rho^3}^{c/\rho^3} \abs*{\frac{\E e^{itS_n} - e^{-t^2/2}}{t}} \, dt 
	+ C_1 \rho^3.
$$
Now substitute the bound on the characteristic function of $S_n$ given by Lemma~\ref{fourier: lem: chf sum}. We get
$$
\sup_{a \in \R} \abs[\Big]{\Prob{S_n \le a} - \Prob{G \le a}}
	\le C_2 \rho^3 \int_{-\infty}^\infty t^2 e^{-t^2/4} \, dt + C_1 \rho^3
	\le C_3 \rho^3.
$$
The Berry--Esseen theorem is proved. \qed

\part{The proof by Stein's method}

This part is also self-contained. It restarts from the proof strategy and develops its own smoothing lemma, Stein equation, and inductive argument. Nothing from Part~I is used.

\section{The proof strategy}	\label{stein: s: proof strategy}

We begin with a bird's-eye, non-rigorous sketch of the proof of Theorem~\ref{thm: berry-esseen}.

\smallskip 

\subsection{Smoothing.}

We begin by writing the cumulative distribution function of $S_n$ as 
\begin{equation}	\label{stein: eq: prob Sn}
	\Prob{S_{n} \le a} = \E \one_{\{ S_{n} \le a \}}.
\end{equation}
The indicator function \(x \mapsto \one_{\{x \le a\}}\) is difficult to handle analytically because it jumps from $1$ to $0$ at $a$. To soften this jump, fix $\e>0$ and define the following {\em $\e$-smoothing function at $a$}:
\begin{equation}	\label{stein: eq: smoothing function}
	\begin{gathered}
		h_a(x)
		\coloneqq
		\begin{cases}
			1, & x\le a,\\
			1-\dfrac{x-a}{\e}, & a<x<a+\e,\\
			0, & x\ge a+\e.
		\end{cases}
	\end{gathered}
	\qquad \qquad
	\begin{tikzpicture}[
		baseline=(current bounding box.center),
		x=0.8cm,
		y=1.2cm
	]
		\draw[->] (-0.3,0) -- (4.6,0);
		\draw[very thick] (-0.2,1) -- (1.5,1) -- (2.5,0) -- (4.3,0);
		\draw[dashed] (1.5,0) -- (1.5,1);
		\node[below, text height=1.5ex, text depth=0.25ex] at (1.5,0) {$a$};
		\node[below, text height=1.5ex, text depth=0.25ex] at (2.7,0) {$a+\e$};
		\node[left] at (-0.2,1) {$1$};
	\end{tikzpicture}
\end{equation}
This function decreases linearly from $1$ to $0$ over the interval $[a,a+\e]$. With this smoothing function, \eqref{stein: eq: prob Sn} becomes
\[
\Prob{S_{n} \le a} \approx \E h_a(S_{n}).
\]
Lemma~\ref{stein: lem: smoothing} makes this approximation rigorous and records the error introduced by smoothing.

\subsection{Gaussian integration by parts}

Let $\varphi(x)=(2\pi)^{-1/2}e^{-x^2/2}$ denote the standard normal density. Since $\varphi'(x)=-x\varphi(x)$, integration by parts gives
$$
	\E f'(G)
	= \int_{\R} f'(x)\varphi(x)\,dx
	= \int_{\R} xf(x)\varphi(x)\,dx
	= \E Gf(G)
$$
for every continuously differentiable function $f$ such that $f$ and $f'$ are bounded. Thus, if $S_n$ were Gaussian, the difference
\begin{equation}	\label{stein: eq: Stein difference}
	\E f'(S_n)-\E S_nf(S_n)
\end{equation}
would vanish. Stein's method turns this observation around: if \eqref{stein: eq: Stein difference} is small for suitable functions $f$, then $S_n$ is approximately Gaussian.

\subsection{Stein's ODE}

Consider the following ordinary differential equation, which we call Stein's ODE:
\begin{equation}	\label{stein: eq: Stein ODE}
	f'(x) - x f(x)
	= h(x) - \E h(G),
	\qquad \text{where } G \sim N(0,1),
\end{equation}
where $h=h_a$ is the $\e$-smoothing function \eqref{stein: eq: smoothing function}. Suppose for the moment that this ODE has a well-behaved solution $f$; we will verify this in Sections~\ref{stein: s: solving Stein ODE} and \ref{stein: s: properties of solution}. Substitute $x=S_n$ and take expectations:
\begin{equation}	\label{stein: eq: Stein ODE consequence}
	\E f'(S_n) - \E S_n f(S_n)
	= \E h(S_n) - \E h(G).
\end{equation}
Together with smoothing, this identity suggests that
$$
\abs[\Big]{\Prob{S_{n} \le a} - \Prob{G \le a}}
\approx \abs[\Big]{\E h(S_n) - \E h(G)}
= \abs[\Big]{\E S_n f(S_n) - \E f'(S_n)}.
$$
Our new task is therefore to show that
\begin{equation}	\label{stein: eq: small?}
	\E S_n f(S_n) \approx \E f'(S_n).
\end{equation}
We have reduced the problem to estimating a quantity intrinsic to the sum $S_n$.

\subsection{Expansion}

Let us unpack the key quantity
\begin{equation}	\label{stein: eq: key quantity}
	\E S_n f(S_n)
	= \sum_{k=1}^n \E X_k f(S_n)
\end{equation}
and estimate its terms separately, beginning with $\E X_n f(S_n)$.
The factors $X_n$ and $f(S_n)$ are not independent because $S_n$ contains $X_n$. Let us isolate this summand:
$$
S_n = S_{n-1} + X_n
$$
The fundamental theorem of calculus gives
$$
f(S_n) = f(S_{n-1}) + X_n \int_0^1 f'(S_{n-1} + t X_n) \, dt.
$$

\subsection{Approximation}
Multiply both sides by $X_n$ and take expectations:
\begin{align}
	\E X_n f(S_n)
	&= \underbrace{\E X_n f(S_{n-1})}_{=0 \textrm{ by independence}}
		+ \E X_n^2 \int_0^1 \underbrace{f'(S_{n-1} + t X_n)}_{\approx f'(S_{n-1}) \textrm{ if $f''$ is controlled}} \, dt \label{stein: eq: FTC} \\
	&\approx \E X_n^2 f'(S_{n-1}) \label{eq: first approx} \\
	&= \s_n^2 \E f'(S_{n-1}) \quad \text{(by independence, denoting $\s_k^2 = \E X_k^2$)} \nonumber \\
	&\approx \s_n^2 \E f'(S_n) \quad \text{(if $f''$ is controlled)}. \label{eq: decoupled}
\end{align}
The same argument applies to every term, giving
$$
\E X_k f(S_n)
\approx \s_k^2 \E f'(S_n),
\quad k=1,\ldots,n.
$$
Summing these estimates and using \eqref{stein: eq: key quantity}, we obtain
$$
\E S_n f(S_n)
\approx \sum_{k=1}^n \s_k^2 \E f'(S_n)
= \E f'(S_n)
$$
because the variances $\s_k^2 = \Var(X_k)$ sum to $1$. This is precisely \eqref{stein: eq: small?}, and it completes the heuristic ``proof''.

\subsection{How to make this rigorous?}

Apart from smoothing, which we handle in Section~\ref{stein: s: smoothing}, the heuristic argument relies on one unverified approximation:
\begin{equation}	\label{stein: eq: f' approximation}
	f'(S_{n-1} + y)
	\approx f'(S_{n-1}).
\end{equation}
We used this approximation in \eqref{stein: eq: FTC} for $y=tX_n$ and in \eqref{eq: decoupled} for $y=X_n$. To verify this approximation, it is enough that $y$ is small (which is intuitively true: $X_n$ is just one term of a large sum $S_n$, so we expect it to be relatively small on average) and that $f''$ is controlled.

Here, $f''$ is the second derivative of a solution to Stein's ODE \eqref{stein: eq: Stein ODE}. We solve the ODE explicitly in Section~\ref{stein: s: solving Stein ODE} and study its solution in Section~\ref{stein: s: properties of solution}. This leads to Lemma~\ref{stein: lem: f' varies slowly}, a rigorous version of \eqref{stein: eq: f' approximation}.

We now carry out these steps.

\section{Smoothing}	\label{stein: s: smoothing}

We can write the cumulative distribution function of a random variable $X$ as
$$
\Prob{X \le a} = \E \one_{(-\infty,a]} (X).
$$
We approximate the indicator function $\one_{(-\infty,a]}$ by the $\e$-smoothing function $h_a$ defined in \eqref{stein: eq: smoothing function}.
For the moment, think of $\e$ as a fixed small quantity. We will eventually choose
$\e = C \sum_{k=1}^{n} \E \abs{X_{k}}^{3}$, which gives the error in the Berry--Esseen theorem. We first record the error introduced by smoothing.

\begin{lemma}[Smoothing]	\label{stein: lem: smoothing}
	Let $h_a$ be the $\e$-smoothing function at $a$, and let $X$ and $Y$ be random variables. Assume that the probability density function of $Y$ is bounded by $M$. Then
	$$\sup_{a \in \R} \abs[\Big]{\Prob{X \le a} - \Prob{Y \le a}}
		\le \sup_{a \in \R} \abs[\Big]{\E h_a(X) - \E h_a(Y)} + M \e.
	$$\end{lemma}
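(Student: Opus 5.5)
The plan is a sandwich argument. The functions $h_a$ and $h_{a-\e}$ bracket the indicators, and the bounded density of $Y$ controls the gap between each indicator and its smoothed version. Pointwise we have
$$
\one_{(-\infty,a-\e]}(x) \le h_{a-\e}(x) \le \one_{(-\infty,a]}(x) \le h_a(x) \le \one_{(-\infty,a+\e]}(x).
$$
This holds because $h_a$ equals $1$ on $(-\infty,a]$, vanishes on $[a+\e,\infty)$, and takes values in $[0,1]$. The same reasoning, with $a$ replaced by $a-\e$, gives the first two inequalities.

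\emph{Upper bound.} Write $\Delta_h \coloneqq \sup_{b} \abs{\E h_b(X)-\E h_b(Y)}$. Then
$$
\Prob{X\le a} \le \E h_a(X) \le \E h_a(Y) + \Delta_h \le \Prob{Y\le a+\e} + \Delta_h \le \Prob{Y\le a} + M\e + \Delta_h.
$$
The last step uses $\Prob{a<Y\le a+\e} = \int_a^{a+\e} p_Y \le M\e$.

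\emph{Lower bound.} In the same way,
$$
\Prob{X\le a} \ge \E h_{a-\e}(X) \ge \E h_{a-\e}(Y) - \Delta_h \ge \Prob{Y\le a-\e} - \Delta_h \ge \Prob{Y\le a} - M\e - \Delta_h.
$$
Combining the two bounds gives $\abs{\Prob{X\le a}-\Prob{Y\le a}} \le \Delta_h + M\e$ for every $a$. Taking the supremum over $a$ yields the lemma.

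There is no real obstacle here. The only point requiring care is that the lower bound uses the smoothing function shifted to $a-\e$ rather than $a$. This is harmless because the supremum defining $\Delta_h$ already ranges over all shifts. It is also why the constant is exactly $1$ in front of both terms, rather than the factor $2$ that appeared in the Fourier-analytic Lemma~\ref{fourier: lem: smoothing discrepancy}.
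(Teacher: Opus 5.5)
Your proof is correct and follows the paper's argument. You use the same pointwise sandwich $\one_{(-\infty,a]} \le h_a \le \one_{(-\infty,a+\e]}$ together with the density bound for one direction, and the shifted function $h_{a-\e}$ for the reverse direction.
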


\begin{proof}
	Note the pointwise inequality
	\begin{equation}	\label{stein: eq: h bound}
		\one_{(-\infty,a]}
		\le h_a
		\le \one_{(-\infty,a+\e]}.
	\end{equation}
	Then 
	\begin{align*}
		\Prob{X \le a} - \Prob{Y \le a}
&\le \Prob{X \le a} - \Prob{Y \le a+\e} + M\e \\
		&= \E \one_{(-\infty,a]}(X) - \E \one_{(-\infty,a+\e]}(Y) + M\e \\
		&\le \E h_a(X) - \E h_a(Y) + M\e.
	\end{align*}
	For the reverse bound, apply \eqref{stein: eq: h bound} with $a-\e$ in place of $a$. The same argument gives
	$$
	\Prob{Y \le a} - \Prob{X \le a}
	\le \E h_{a-\e}(Y) - \E h_{a-\e}(X) + M\e.
	$$
	Combining the two bounds completes the proof.
\end{proof}

\section{Solving Stein's ODE}	\label{stein: s: solving Stein ODE}

Fix a bounded continuous function $h: \R \to \R$ and consider Stein's ODE \eqref{stein: eq: Stein ODE}, restated here:
\begin{equation}	\label{stein: eq: Stein ODE restated}
	f'(x) - x f(x)
	= h(x) - \E h(G),
	\qquad \text{where } G \sim N(0,1).
\end{equation}
We solve this ODE by the method of integrating factors. Multiplying both sides by $e^{-x^2/2}$ gives
$$
\left( e^{-x^2/2} f(x) \right)'
= e^{-x^2/2} \Big( h(x) - \E h(G) \Big).
$$
The fundamental theorem of calculus now gives
$$
e^{-x^2/2} f(x)
= \int_{-\infty}^x e^{-y^2/2} \Big( h(y) - \E h(G) \Big) \, dy + C.
$$
Setting $C=0$, we obtain the canonical solution
\begin{equation}	\label{stein: eq: ODE solution 1}
	f(x)
	= e^{x^2/2} \int_{-\infty}^x e^{-y^2/2} \Big( h(y) - \E h(G) \Big) \, dy.
\end{equation}
Any other solution differs from this one by $Ce^{x^2/2}$, so the canonical solution is the unique bounded solution. Moreover, the total integral is zero:
$$
\frac{1}{\sqrt{2\pi}} \int_{-\infty}^\infty e^{-y^2/2} \Big( h(y) - \E h(G) \Big) \, dy
= \E h(G) - \E h(G)
= 0,
$$
and therefore we may rewrite \eqref{stein: eq: ODE solution 1} as
\begin{equation}	\label{stein: eq: ODE solution 2}
	f(x)
	= -e^{x^2/2} \int_x^{\infty} e^{-y^2/2} \Big( h(y) - \E h(G) \Big) \, dy.
\end{equation}

\section{Properties of the solution of Stein's ODE}	\label{stein: s: properties of solution}

The two formulas \eqref{stein: eq: ODE solution 1} and \eqref{stein: eq: ODE solution 2} give three simple bounds on the canonical solution of Stein's ODE.

\begin{lemma}[Bounds on the solution of Stein's ODE] \label{stein: lem: ODE solution properties}
	Let $h: \R \to [0,1]$ be continuous. Then the canonical solution \eqref{stein: eq: ODE solution 1} of Stein's ODE \eqref{stein: eq: Stein ODE restated} satisfies, for every $x \in \R$,
	\begin{equation}	\label{stein: eq: ODE solution properties}
		\abs{f(x)} \le 2; \qquad
		\abs{xf(x)} \le 1; \qquad
		\abs{f'(x)} \le 2.
	\end{equation}
\end{lemma}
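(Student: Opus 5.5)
Write $\tilde h(y) \coloneqq h(y) - \E h(G)$. Since $h$ takes values in $[0,1]$, so does $\E h(G)$, and hence $\abs{\tilde h(y)} \le 1$ for all $y$. The plan is to use formula \eqref{stein: eq: ODE solution 1} for $x \le 0$ and formula \eqref{stein: eq: ODE solution 2} for $x \ge 0$. In each case the integral then runs over the Gaussian tail away from the origin. By the symmetry $x \mapsto -x$ it suffices to treat $x \ge 0$. There \eqref{stein: eq: ODE solution 2} gives
$$
\abs{f(x)} \le e^{x^2/2} \int_x^\infty e^{-y^2/2}\,dy.
$$
Everything reduces to two classical estimates on the Mills ratio $R(x) \coloneqq e^{x^2/2}\int_x^\infty e^{-y^2/2}\,dy$ for $x \ge 0$.

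\emph{First estimate: $xR(x) \le 1$.} For $y \ge x \ge 0$ we have $x \le y$, so
$$
x\int_x^\infty e^{-y^2/2}\,dy \le \int_x^\infty y e^{-y^2/2}\,dy = e^{-x^2/2}.
$$
This gives $\abs{xf(x)} \le 1$, the second bound in \eqref{stein: eq: ODE solution properties}.

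\emph{Second estimate: $R(x) \le \sqrt{\pi/2} < 2$.} I will show that $R$ is nonincreasing on $[0,\infty)$. Its derivative is $R'(x) = xR(x) - 1$, which is $\le 0$ by the first estimate. Therefore $R(x) \le R(0) = \int_0^\infty e^{-y^2/2}\,dy = \sqrt{\pi/2} \approx 1.25$. This gives $\abs{f(x)} \le 2$, the first bound. (Alternatively one can split into $x \le 1$, where $R(x) \le R(0)$, and $x \ge 1$, where $R(x) \le 1/x \le 1$; either route works.)

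\emph{Third bound.} Read $f'$ off Stein's ODE \eqref{stein: eq: Stein ODE restated}:
$$
f'(x) = xf(x) + \tilde h(x).
$$
The first term is at most $1$ in absolute value by the first estimate, and the second is at most $1$ since $\abs{\tilde h} \le 1$. Hence $\abs{f'(x)} \le 2$. Continuity of $h$ ensures that $f$ is differentiable and actually satisfies the ODE, by the fundamental theorem of calculus applied to \eqref{stein: eq: ODE solution 1}.

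The only step requiring any care is the monotonicity of the Mills ratio, and that follows directly from the first estimate. The symmetric case $x \le 0$ uses \eqref{stein: eq: ODE solution 1} together with the substitution $y \mapsto -y$, which turns it into the same tail integral.
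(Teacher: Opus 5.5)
Your proof is correct and follows the paper's structure. You use formula \eqref{stein: eq: ODE solution 2} for $x \ge 0$ and formula \eqref{stein: eq: ODE solution 1} for $x \le 0$, reduce to the Gaussian tail ratio bounds $R(x) \le \sqrt{\pi/2}$ and $xR(x) \le 1$, and then read off $\abs{f'} \le \abs{xf} + \abs{h - \E h(G)} \le 2$ from the ODE. The only difference is in how the two bounds on $R$ are obtained: the paper substitutes $y = x+t$ to write $R(x) = \int_0^\infty e^{-xt-t^2/2}\,dt$ and drops one of the two exponent terms for each bound, whereas you compare $x \le y$ inside the integral and use the monotonicity $R' = xR - 1 \le 0$.
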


\begin{proof}
	First, assume that $x>0$. Since $0 \le h \le 1$ everywhere, \eqref{stein: eq: ODE solution 2} gives
	$$
	\abs{f(x)} 
	\le e^{x^2/2} \int_x^{\infty} e^{-y^2/2} \, dy
	= \int_0^{\infty} e^{-xt-t^2/2} \, dt
	$$
	by the change of variables $y=x+t$. Dropping the term $-xt$ gives
	$$
	\abs{f(x)} \le \int_0^{\infty} e^{-t^2/2} \, dt
	= \sqrt{\frac{\pi}{2}}
	\le 2,
	$$
	proving the first bound in \eqref{stein: eq: ODE solution properties} for $x>0$.
	Dropping the term $-t^2/2$ instead gives
	$$
	\abs{f(x)} \le \int_0^{\infty} e^{-xt} \, dt
	= \frac{1}{x},
	$$
	proving the second bound in \eqref{stein: eq: ODE solution properties} for $x>0$. For $x \le 0$, the same argument applies using \eqref{stein: eq: ODE solution 1} in place of \eqref{stein: eq: ODE solution 2}.
	
	Finally, Stein's ODE \eqref{stein: eq: Stein ODE restated} and the triangle inequality give
	$$
	\abs{f'(x)} 
	\le \abs{x f(x)} + \abs{h(x) - \E h(G)}.
	$$
	The first term on the right is bounded by $1$, as proved above. The second is also bounded by $1$ because the range of $h$ is $[0,1]$. Hence $\abs{f'(x)}\le 2$.
\end{proof}

We return to \eqref{stein: eq: f' approximation}, where we need to show that $f'$ varies slowly. For this, we examine $f''$.

We now specialize to the functions needed in the proof. Let $h=h_a$ be the $\e$-smoothing function at $a$ defined in \eqref{stein: eq: smoothing function}. This function is differentiable except at $a$ and $a+\e$. Differentiating Stein's ODE \eqref{stein: eq: Stein ODE restated} gives
$$
f''(x) = f(x) + xf'(x) + h'(x)
$$
whenever $x\ne a,a+\e$. By \eqref{stein: eq: ODE solution properties}, $\abs{f(x)} \le 2$ and $\abs{f'(x)} \le 2$, while $h'(x) = -\frac{1}{\e} \one_{[a,a+\e]}(x)$ wherever the derivative exists. Thus
\begin{equation}	\label{stein: eq: f''}
	\abs{f''(x)}
	\le 2 + 2\abs{x} + \frac{1}{\e} \one_{[a,a+\e]}(x).
\end{equation}

A bound on $f''$ controls the increments of $f'$. Indeed, the fundamental theorem of calculus gives\footnote{More precisely, the smoothing function $h=h_a$ is Lipschitz and $f$ is continuously differentiable. Hence, by \eqref{stein: eq: Stein ODE restated}, $f'=xf+h-\E h(G)$ is locally absolutely continuous. It follows that $f''=f+xf'+h'$ almost everywhere, and the fundamental theorem of calculus applies to $f'$.}
$$
f'(x+y) - f'(x) 
= \int_x^{x+y} f''(t) \, dt
= y \int_0^1 f''(x+sy) \, ds.
$$
Thus,
$$
\abs{f'(x+y) - f'(x)}
\le \abs{y} \int_0^1 \abs{f''(x+sy)} \, ds.
$$
Substituting $x+sy$ for $x$ in \eqref{stein: eq: f''} and using $\abs{x+sy} \le \abs{x} + \abs{y}$, we obtain
\begin{equation}	\label{stein: eq: derivative varies prelim}
	\abs{f'(x+y) - f'(x)}
	\le \abs{y} \left( 2 + 2\abs{x} + 2\abs{y} + \frac{1}{\e} \int_0^1 \one_{[a,a+\e]}(x+sy) \, ds \right).
\end{equation}
This simplifies to the following deterministic bound.

\begin{lemma}[A deterministic increment bound]	\label{stein: lem: derivative varies}
	Let $h=h_a$ be the $\e$-smoothing function at $a$, defined in \eqref{stein: eq: smoothing function}. Then the canonical solution \eqref{stein: eq: ODE solution 2} of Stein's ODE \eqref{stein: eq: Stein ODE restated} satisfies for all $x,y \in \R$:
	\begin{equation}	\label{stein: eq: derivative varies}
		\abs{f'(x+y) - f'(x)}
		\le 4 \abs{y} \left( 1 + \abs{x} + \frac{1}{\e} \int_0^1 \one_{[a,a+\e]}(x+sy) \, ds \right).
	\end{equation}
\end{lemma}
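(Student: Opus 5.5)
This bound follows from the preliminary estimate \eqref{stein: eq: derivative varies prelim} by crude absorption of constants; the only substantive work is justifying that estimate rigorously. I would start by recording that the canonical solution $f$ is continuously differentiable. Since $h=h_a$ is Lipschitz, the relation $f'=xf+h-\E h(G)$ from Stein's ODE \eqref{stein: eq: Stein ODE restated} shows that $f'$ is locally absolutely continuous. Differentiating, we get $f''=f+xf'+h'$ almost everywhere, with $h'=-\frac1\e\one_{[a,a+\e]}$ almost everywhere.

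Next, Lemma~\ref{stein: lem: ODE solution properties} applies because $0\le h_a\le 1$ and $h_a$ is continuous. It gives $\abs{f}\le 2$ and $\abs{f'}\le 2$, which yields the pointwise a.e.\ bound \eqref{stein: eq: f''}.

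Then I would write
\[
f'(x+y)-f'(x)=y\int_0^1 f''(x+sy)\,ds,
\]
which is valid by absolute continuity. The change of variables from $\int_x^{x+y}$ is fine for either sign of $y$, and the a.e.\ bound suffices because we are integrating. Inserting \eqref{stein: eq: f''} at the point $x+sy$ and using $\abs{x+sy}\le\abs{x}+\abs{y}$ gives \eqref{stein: eq: derivative varies prelim}.

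The remaining step is to bring that estimate to the form \eqref{stein: eq: derivative varies}. This runs into the term $2\abs{y}$ inside the parentheses, which has no counterpart in the claimed bound. So I would split into two cases.

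If $\abs{y}\le 1$, then $2+2\abs{x}+2\abs{y}\le 4+2\abs{x}\le 4(1+\abs{x})$. Also the indicator term has coefficient $1\le 4$, so \eqref{stein: eq: derivative varies} follows directly.

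If $\abs{y}>1$, I would bypass the $f''$ bound entirely. Lemma~\ref{stein: lem: ODE solution properties} gives $\abs{f'(x+y)-f'(x)}\le 4<4\abs{y}$, and the right-hand side of \eqref{stein: eq: derivative varies} is at least $4\abs{y}$ because the other terms in the parentheses are nonnegative.

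I expect the main obstacle to be this $\abs{y}$-term bookkeeping, together with justifying the fundamental theorem of calculus for $f'$ across the kinks of $h_a$ at $a$ and $a+\e$. The case split above resolves the first. The second is handled by local absolute continuity, as in the footnote preceding the statement.
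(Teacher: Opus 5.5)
Your proposal is correct and matches the paper's argument. You derive the preliminary estimate \eqref{stein: eq: derivative varies prelim} via local absolute continuity of $f'$, as in the paper's footnote. You then split into the cases $\abs{y}\le 1$ (absorb $2\abs{y}$ into the constant) and $\abs{y}>1$ (use $\abs{f'}\le 2$ to bound the left side by $4<4\abs{y}$), exactly as the paper does.
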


\begin{proof}
	If $\abs{y} \le 1$, the conclusion follows from \eqref{stein: eq: derivative varies prelim} after replacing $2\abs{y}$ by $2$. If $\abs{y}>1$, the left-hand side of \eqref{stein: eq: derivative varies} is bounded by $4$ by \eqref{stein: eq: ODE solution properties}, while its right-hand side is at least $4\abs{y}>4$.
\end{proof}

\section{Preparing the induction}

We will soon average the deterministic increment bound over the partial sum $S_{n-1}$. But before that, let us introduce the error term in Berry-Esseen theorem we want to bound:

\begin{definition}[The error in the Berry--Esseen theorem]	\label{stein: def: delta n}
	Let $\d_n(\rho)$ denote the infimum of all $\d>0$ such that
	$$
	\sup_{a \in \R} \abs[\Big]{\Prob{S_{n} \le a} - \Prob{G \le a}} \le \d
	$$
	whenever $S_n = X_1+\cdots+X_n$ is a sum of independent random variables satisfying
	\begin{equation}	\label{stein: eq: Xk}
		\E X_k = 0, \qquad  
		\sum_{k=1}^n \E X_k^2 = 1, \qquad
		\sum_{k=1}^n \E \abs{X_k}^3 \le \rho^3.
	\end{equation}
\end{definition}

Our goal is to prove that\footnote{Here and throughout the rest of the proof, the notation $a \lesssim b$ means $a \le Cb$ for an absolute constant $C$.}
\begin{equation}	\label{stein: eq: BE goal}
	\d_n(\rho) \le C \rho^3
\end{equation}
for every $n$ and $\rho$, where $C$ is an absolute constant.

We may restrict ourselves to the case where no single summand accounts for more than half of the total variance:
\begin{equation}	\label{stein: eq: leave-one-out variance}
	\E X_k^2 \le \frac{1}{2}
	\qquad \text{for every } k=1,\ldots,n.
\end{equation}
Indeed, if $\E X_k^2 > \frac{1}{2}$ for some $k$, then
$$
\sum_{j=1}^n \E \abs{X_j}^3 
\ge \E \abs{X_k}^3 
\ge \big(\E X_k^2\big)^{3/2} > 2^{-3/2}
> \frac{1}{3}.
$$
Since $\d_n(\rho) \le 1$ holds trivially, the desired bound \eqref{stein: eq: BE goal} is then immediate as long as we choose $C \ge 3$. Thus, it remains to consider the case \eqref{stein: eq: leave-one-out variance}.

We are now ready for a rigorous version of the key approximation \eqref{stein: eq: f' approximation}.

\begin{lemma}[An averaged increment bound]	\label{stein: lem: f' varies slowly}
	Let $h=h_a$ be the $\e$-smoothing function at $a$ defined in \eqref{stein: eq: smoothing function}, and let $f$ be the canonical solution \eqref{stein: eq: ODE solution 1} of Stein's ODE. Suppose that
	$$
	S_n = X_1+\ldots+X_n, \quad n \ge 2,
	$$ 
	is a sum of independent random variables satisfying \eqref{stein: eq: Xk} and \eqref{stein: eq: leave-one-out variance}. Then, for every fixed $y \in \R$,
	$$
	\E \abs[\Big]{f'(S_{n-1}+y) - f'(S_{n-1})}
	\le C \abs{y} \left( 1 + \frac{\d_{n-1}(2\rho)}{\e} \right).
	$$
\end{lemma}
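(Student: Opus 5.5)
Apply the deterministic increment bound, Lemma~\ref{stein: lem: derivative varies}, with $x=S_{n-1}$ and the fixed $y$, and take expectations:
$$
\E\abs{f'(S_{n-1}+y)-f'(S_{n-1})}
\le 4\abs{y}\Big(1+\E\abs{S_{n-1}}+\frac{1}{\e}\int_0^1 \Prob{S_{n-1}+sy\in[a,a+\e]}\,ds\Big),
$$
where Fubini moves the expectation inside the $s$-integral. The middle term is easy: $\E\abs{S_{n-1}}\le(\E S_{n-1}^2)^{1/2}\le 1$. It therefore remains to show that for every interval $I$ of length $\e$,
$$
\Prob{S_{n-1}\in I}\lesssim \e+\d_{n-1}(2\rho).
$$
Dividing by $\e$ then gives the claim.

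The key step is to compare $S_{n-1}$ with a Gaussian using the definition of $\d_{n-1}$. Set $\s^2:=\Var(S_{n-1})=1-\E X_n^2\ge 1/2$ by \eqref{stein: eq: leave-one-out variance}, and normalize: $T:=S_{n-1}/\s=\sum_{k<n}X_k/\s$. Its summands are independent, mean zero, with total variance $1$. Their third moments satisfy
$$
\sum_{k<n}\E\abs{X_k}^3/\s^3\le 2^{3/2}\rho^3\le(2\rho)^3 .
$$
Hence, by Definition~\ref{stein: def: delta n}, the distribution function of $T$ is within $\d_{n-1}(2\rho)+\eta$ of that of $G$ uniformly, for any $\eta>0$. (The definition is an infimum, so we add $\eta$ and let $\eta\to0$ at the end.)

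Now $S_{n-1}\in[b,b+\e]$ if and only if $T\in[b/\s,(b+\e)/\s]$, an interval of length $\e/\s\le\sqrt2\,\e$. Writing its probability as a difference of distribution function values, with the left endpoint handled by approximating closed intervals from the left or by using $\Prob{T<c}$, which obeys the same uniform bound via limits, we get
$$
\Prob{S_{n-1}\in I}\le \Prob{G\in[b/\s,(b+\e)/\s]}+2\d_{n-1}(2\rho)\le \frac{\sqrt2\,\e}{\sqrt{2\pi}}+2\d_{n-1}(2\rho).
$$
Plugging this in yields
$$
\E\abs{f'(S_{n-1}+y)-f'(S_{n-1})}\le 4\abs{y}\big(2+C'+2\d_{n-1}(2\rho)/\e\big),
$$
which is the stated bound.

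The main obstacle is this rescaling step. One has to check that normalizing $S_{n-1}$ to unit variance keeps the third-moment sum within $(2\rho)^3$, which is exactly where the leave-one-out assumption \eqref{stein: eq: leave-one-out variance} and the argument $2\rho$ come from. One also has to handle the infimum in the definition and the closed-interval endpoint carefully. The hypothesis $n\ge2$ ensures $S_{n-1}$ is a genuine nonempty sum, so that $\d_{n-1}$ makes sense.
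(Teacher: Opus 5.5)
Your proposal is correct and follows the paper's proof step for step: apply the deterministic increment bound at $x=S_{n-1}$, bound $\E\abs{S_{n-1}}\le 1$ by Cauchy--Schwarz, and control the small-interval probability by normalizing $S_{n-1}$ (whose variance is at least $1/2$ by the leave-one-out assumption) and invoking $\delta_{n-1}(2\rho)$. Two details differ only cosmetically. You plug in $2\rho$ directly, where the paper uses $\rho/\sigma$ together with the monotonicity of $\delta_{n-1}$. You also treat the infimum and the closed left endpoint explicitly, which the paper leaves implicit.
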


\begin{proof}
	Apply Lemma~\ref{stein: lem: derivative varies} with $x=S_{n-1}$ and take expectations:
	\begin{equation}	\label{stein: eq: f' varies slowly}
		\E \abs[\Big]{f'(S_{n-1}+y) - f'(S_{n-1})}
		\lesssim \abs{y} \left( 1 + \E \abs{S_{n-1}} + \frac{1}{\e} \int_0^1 \Prob[\big]{S_{n-1}+sy \in [a,a+\e]} \, ds \right)
	\end{equation}
	By Cauchy--Schwarz,
	\begin{equation}	\label{stein: eq: abs moment}
		\E \abs{S_{n-1}}
		\le \sqrt{\Var(S_{n-1})} 
		\le \sqrt{\Var(S_n)}
		= 1. 
	\end{equation}
	It remains to bound $\Prob[\big]{S_{n-1}+sy \in [a,a+\e]}$. Set
	$$
	\s^2 \coloneqq \Var(S_{n-1}) 
	= \sum_{k=1}^{n-1} \E X_k^2 
	\ge \frac{1}{2}
	$$
	where the last inequality follows from \eqref{stein: eq: Xk} and \eqref{stein: eq: leave-one-out variance}. Thus, $S_{n-1}/\s$ is a sum of $n-1$ independent mean-zero random variables $X_k/\s$. Their second moments sum to $1$, and the sum of their third moments is at most $(\rho/\s)^3$. Comparing the distribution functions at the two endpoints of the interval introduces two error terms. Therefore,
	\begin{align*}
		\Prob[\big]{S_{n-1}+sy \in [a,a+\e]}
		&= \Prob[\big]{S_{n-1}/\s \in [b,b+\e/\s]}
			\quad \text{(where $b=(a-sy)/\s$)} \\
		&\le \Prob[\big]{G \in [b,b+\e/\s]} + 2\d_{n-1}(\rho/\s)
			\quad \text{(by Definition~\ref{stein: def: delta n})} \\
		&\lesssim \e + \d_{n-1}(2\rho)
	\end{align*}
	because $\s \ge 1/2$ and the density of $G$ is bounded by an absolute constant. Substituting this estimate and \eqref{stein: eq: abs moment} into \eqref{stein: eq: f' varies slowly} gives
	$$
	\E \abs[\Big]{f'(S_{n-1}+y) - f'(S_{n-1})}
	\lesssim \abs{y} \left( 1 + \frac{1}{\e} \big(\e + \d_{n-1}(2\rho) \big) \right), 
	$$
	which proves the lemma.
\end{proof}

The appearance of $\d_{n-1}$ is the key. We have used the quantity we want to estimate, but with one fewer summand, so induction on $n$ becomes possible.

\section{Proof of Theorem~\ref{thm: berry-esseen}}

We now combine smoothing, the averaged increment bound, and induction on the number of summands. For each $a\in\R$, let $h_a$ be the $\e$-smoothing function at $a$ defined in \eqref{stein: eq: smoothing function}, and let $f_a$ be the corresponding canonical solution \eqref{stein: eq: ODE solution 2} of Stein's ODE. Then
\begin{align}
	\sup_{a \in \R} \abs[\Big]{\Prob{S_{n} \le a} - \Prob{G \le a}}
	&\le \sup_{a \in \R} \abs[\Big]{\E h_a(S_n) - \E h_a(G)} + C \e \nonumber\\
	&= \sup_{a \in \R} \abs[\Big]{\E S_n f_a(S_n) - \E f_a'(S_n)} + C \e. \label{stein: eq: approx Stein}
\end{align}
The first step is Lemma~\ref{stein: lem: smoothing}; the second is Stein's ODE, as in \eqref{stein: eq: Stein ODE consequence}.

Fix $a \in \R$ and suppress the subscript in $f_a$. We again unpack the key quantity
\begin{equation}	\label{stein: eq: unpacking}
	\E S_n f(S_n)
	= \sum_{k=1}^n \E X_k f(S_n).
\end{equation}
Set 
$$
\s_k^2\coloneqq \E X_k^2.
$$ 
Expanding $f(S_n)$ around $S_{n-1}$ as in the heuristic argument \eqref{stein: eq: FTC}--\eqref{eq: decoupled}, we obtain
\begin{equation}	\label{stein: eq: Xn fSn}
	\abs[\Big]{\E X_n f(S_n) - \s_n^2 \E f'(S_n)}
	\le A+B, 
\end{equation}
where
$$
A = \E X_n^2 \int_0^1 \abs[\Big]{f'(S_{n-1} + t X_n) - f'(S_{n-1})} \, dt; \qquad
B = \s_n^2 \E \abs[\Big]{f'(S_n) - f'(S_{n-1})}.
$$
($A$ is the approximation error in \eqref{eq: first approx}, and $B$ is the approximation error in \eqref{eq: decoupled}.)

To bound $A$, condition on $X_n$ and write $\E_n[\cdot]\coloneqq\E[\cdot\mid X_n]$. The law of total expectation gives
$$
A = \E \Big[ X_n^2 \int_0^1 \underbrace{\E_n \abs[\Big]{f'(S_{n-1} + t X_n) - f'(S_{n-1})}}_{\lesssim \abs{X_n} \left( 1 + \d_{n-1}(2\rho)/\e \right) \text{ by Lemma~\ref{stein: lem: f' varies slowly}}} \, dt \Big]
\lesssim \underbrace{\E \abs{X_n}^3}_{\eqqcolon \rho_n^3} \left( 1 + \frac{\d_{n-1}(2\rho)}{\e} \right).
$$
Similarly, 
$$
B = \s_n^2 \E \Big[ \underbrace{\E_n \abs[\Big]{f'(S_{n-1}+X_n) - f'(S_{n-1})}}_{\lesssim \abs{X_n} \left( 1 + \d_{n-1}(2\rho)/\e \right) \text{ by Lemma~\ref{stein: lem: f' varies slowly}}} \Big]
\lesssim \underbrace{\s_n^2 \E \abs{X_n}}_{\le \rho_n^3} \left( 1 + \frac{\d_{n-1}(2\rho)}{\e} \right).
$$
Here we used $\s_n \le \rho_n$ and $\E \abs{X_n} \le \rho_n$. Combining the bounds for $A$ and $B$ controls \eqref{stein: eq: Xn fSn}. The same computation applies to every $k$: relabel the summands so that $X_k$ is last. Thus,
$$
\abs[\Big]{\E X_k f(S_n) - \s_k^2 \E f'(S_n)}
\lesssim \rho_k^3 \left( 1 + \frac{\d_{n-1}(2\rho)}{\e} \right).
$$
Summing these estimates in \eqref{stein: eq: unpacking} and using $\sum_{k=1}^n \s_k^2 = 1$, we obtain
$$
\abs[\Big]{\E S_n f(S_n) - \E f'(S_n)}
\lesssim \rho^3 \left( 1 + \frac{\d_{n-1}(2\rho)}{\e} \right)
\quad \text{for any $\rho$ such that } \sum_{k=1}^n \rho_k^3 \le \rho^3.
$$
This bound is uniform in $a \in \R$. Substituting it into \eqref{stein: eq: approx Stein} and restoring the subscript $a$, we obtain
$$
\sup_{a \in \R} \abs[\Big]{\Prob{S_{n} \le a} - \Prob{G \le a}}
\lesssim \rho^3 \left( 1 + \frac{\d_{n-1}(2\rho)}{\e} \right) + \e.
$$
By Definition~\ref{stein: def: delta n},
$$
\delta_n(\rho) 
\le C \rho^3 \left( 1 + \frac{\d_{n-1}(2\rho)}{\e} \right) + C\e.
$$
Choosing $\e \coloneqq 10C\rho^3$, we get
$$
\delta_n(\rho) 
\le C' \rho^3 + \frac{\d_{n-1}(2\rho)}{10},
$$
where $C' \ge 1$ is an absolute constant. This recurrence closes the induction. Assume that
$$
\d_{n-1}(\rho) \le 5C' \rho^3
\quad \text{for any } \rho.
$$
Then 
$$
\d_n(\rho) 
\le C' \rho^3 + \frac{5C' \cdot 8 \rho^3}{10}
=5C' \rho^3
\quad \text{for any } \rho,
$$
which verifies the induction step. The base case is immediate: $\d_1(\rho) \le 1$, while the assumptions give
$$
\rho^3 \ge \E \abs{X_1}^3 \ge (\E X_1^2)^{3/2} = 1.
$$

The Berry--Esseen theorem is proved. \qed

\section*{Acknowledgements}
I am grateful to Pedro Abdalla and Guangyi Zou, who kindly read an early version of the Fourier-analytic proof and helped eliminate several typos and inaccuracies.

\subsection*{Statement on AI use}
The mathematical arguments and core exposition in this paper are entirely the author's own. After the two notes were completed, ChatGPT Pro was used as an ``artificial referee'' to review the mathematics, polish the writing, and assist in combining the notes.

\end{document}